\newcommand*{\mailto}[1]{\href{mailto:#1}{\nolinkurl{#1}}}
\newcommand{\arxiv}[1]{\href{http://arxiv.org/abs/#1}{arXiv:#1}}
\newcommand{\msc}[1]{\href{http://www.ams.org/msc/msc2010.html?t=&s=#1}{#1}}
\newcommand{\ack}{\section*{Acknowledgments}}
\newtheorem{theorem}{Theorem}[section]
\newtheorem{corollary}[theorem]{Corollary}
\newtheorem{hypothesis}{Hypothesis}[section]
\theoremstyle{definition}
\numberwithin{equation}{section}
\DeclareMathOperator{\dom}{dom}
\newcommand\N{{\mathbb{N}}}
\newcommand\R{{\mathbb{R}}}
\newcommand\C{{\mathbb{C}}}
\newcommand\Z{{\mathbb{Z}}}
\newcommand\cI{{\mathcal{I}}}
\newcommand\cG{{\mathcal{G}}}
\newcommand\cE{{\mathcal{E}}}
\newcommand\cV{{\mathcal{V}}}
\newcommand\cR{{\mathcal{R}}}
\newcommand\bH{{\mathbf{H}}}
\newcommand\rh{{\mathbf{h}}}
\newcommand\E{{\rm{e}}}
\newcommand\vol{{\rm{vol}}}
\newcommand\I{{\rm{i}}}
\newcommand\f{{\bf{f}}}
\newcommand\g{{\bf{g}}}
\def\wt#1{{{\widetilde #1} }}
\begin{document}

\title[]{A note on Spectral Analysis of Quantum graphs}

\author[N. Nicolussi]{Noema Nicolussi}
\address{Faculty of Mathematics\\ University of Vienna\\ 
Oskar-Morgenstern-Platz 1\\ 1090 Vienna\\ Austria}
\email{noema.nicolussi@univie.ac.at}
\curraddr{Institut f\"ur Mathematik\\
Universität Potsdam\\
Karl-Liebknecht-Str.~24-25\\ 14476 Potsdam\\ Germany}

\thanks{{\it Research supported by the Austrian Science Fund (FWF) 
under Grant No.~J4497~(N.N.)}}

\keywords{quantum graph, metric graph, Laplacians on graphs, spectral graph theory}
\subjclass[2020]{Primary \msc{81Q35}; Secondary \msc{34B45}, \msc{35R02}, \msc{81Q10}}

\begin{abstract}
We provide an introductory review of some topics in spectral theory of Laplacians on metric graphs. We focus on three different aspects: the trace formula, the self-adjointness problem and connections between Laplacians on metric graphs and discrete graph Laplacians.
\end{abstract}

\maketitle

%%%%%%%%%%%%%%%%%%%%%%%%%%%%%%%%%%%%%%%%%%
\section{Introduction}
%%%%%%%%%%%%%%%%%%%%%%%%%%%%%%%%%%%%%%%%%%

\emph{Quantum graphs} are Schrödinger operators on metric graphs (i.e. discrete graphs where edges are identified with intervals of certain lengths), acting on edgewise smooth functions satisfying certain coupling conditions at the vertices. The most studied quantum graph is the \emph{Kirchhoff Laplacian}, which corresponds to a Laplacian without potential and provides the analog of the Laplace--Beltrami operator on Riemannian manifolds in this setting.

Originally introduced by Pauling in the 1930s in order to model free electrons in organic molecules, quantum graphs are connected   to several diverse branches of mathematics and mathematical physics, placing them at the intersection of many subjects in mathematics and engineering. Their key features include their use as simplified models of complicated quantum systems, the appearance of metric graphs in tropical and algebraic geometry (where they can be seen as non-Archimedean analogs of Riemann surfaces), and applications in physics, mathematical biology and material sciences (often based on a one-dimensional graph approximation of a thin wire-like material). In the last decades, they have been studied extensively from different perspectives, and we only refer to a brief selection of recent monographs and collected works \cite{bcfk06, bk13, dv, ekkst08, exko, gnutzmannsmilansky, kn, post} for an overview and further references.

In this expository note, we give a brief overview of some aspects in the spectral theory of the Kirchhoff Laplacian. We discuss three different topics: the \emph{trace formula} (Section~\ref{sec:Traceformula}), the \emph{self-adjointness problem} (Section~\ref{sec:Selfadjointness}) and \emph{connections between quantum graphs and discrete graph Laplacians} (Section~\ref{sec:DiscretevsContinuous}). Necessary definitions and prerequisites are collected in Section~\ref{sec:Preliminaries}.

%%%%%%%%%%%%%%%%%%%%%%%%%%%%%%%%%%%%%%%%%%
\section{Preliminaries} \label{sec:Preliminaries}
%%%%%%%%%%%%%%%%%%%%%%%%%%%%%%%%%%%%%%%%%%

\subsection{Metric graphs} \label{ss:MetricGraphs}

Let $\cG_d = (\cV,\cE)$ be an undirected {\em graph}, that is, $\cV$ is a finite or countably infinite set of vertices and $\cE$ is a finite or countably infinite set of edges. We call two vertices $u$, $v\in \mathcal{V}$ {\em neighbors} and write $u\sim v$ if there is an edge $e_{u,v}\in \mathcal{E}$ connecting $u$ and $v$. We will always assume that $\cG_d$ is \emph{simple} (no loops or multiple edges) and \emph{connected}\footnote{These assumptions could be removed with a little extra effort, however, we impose them in order to streamline the exposition.}.

For every vertex $v\in \mathcal{V}$, we denote by $\cE_v$  the set of edges incident to $v$. The {\em degree} of a vertex $v\in\cV$ is given by
\begin{equation}  \label{eq:combdeg}
\deg(v):= \#\{e|\, e\in\cE_v\} .
\end{equation}

The following assumption is imposed throughout the paper.

\begin{hypothesis}\label{hyp:locfin}
$\cG_d$ is \emph{locally finite}, that is, $\deg(v) < \infty$ for every $v \in \cV$.
\end{hypothesis}

Assigning each edge $e\in\cE$ a finite length $\ell(e) \in (0,\infty)$ and considering the corresponding {\em edge length functio}n $\ell \colon \cE\to(0, \infty)$, we can naturally associate with $(\cG_d, \ell ) = (\cV,\cE,  \ell )$ a metric space $\cG$: we identify each edge $e \in \cE$ with an interval $\cI_e = [0, \ell(e)]$ and then obtain a topological space $\cG$ by "glueing together" the intervals according to the incidence relations in $\cG_d$. The topology on $\cG$ is metrizable by the {\em natural path metric} $\varrho_0$ -- the distance between two points $x,y \in\cG$ is defined as the arc length of the ``shortest continuous path" connecting them (for infinite graphs, such a path does not necessarily exist and one needs to take the infimum over all such paths).

A \emph{metric graph} is a metric space $\cG$ arising from the above construction for some collection $(\cG_d, \ell) =(\cV, \cE, \ell)$. Conversely, a collection $(\cG_d, \ell)$ whose metric realization coincides with $\cG$ is called a \emph{model} of $\cG$. Clearly, any metric graph has infinitely many models (e.g., obtained by subdividing edges using vertices of degree two). However, for the rest of this article, {\em we will always consider a metric graph $\cG$ together with a fixed model}. Abusing slightly the notation, we will usually not distinguish between the two objects and also write $\cG = (\cV, \cE, \ell)$ or $\cG= (\cG_d, \ell)$.

A metric graph $\cG = (\cV, \cE, \ell)$ is called {\em finite}, if $\cG_d = (\cV, \cE)$ has finitely many edges and vertices, and {\em infinite} otherwise. Note that $\cG = (\cV, \cE, \ell)$ is finite exactly when $\cG$ is compact as a topological space, and hence we can equivalently speak about {\em compact} and {\em non-compact} metric graphs.

\subsection{The Kirchhoff Laplacian}  \label{ss:KirchhoffLaplacian}
Let $\cG = (\cV, \cE, \ell)$ be a metric graph. The metric space $\cG$ carries a natural {\em Lebesgue measure}, obtained from the Lebesgue measures of its interval edges. The associated $L^2$-space $L^2(\cG)$ consists of all (equivalence classes of almost everywhere defined) functions $f \colon \cG \to \C$ such that
\begin{equation} \label{eq:L^2norm}
\| f \|^2_{L^2(\cG)} := \int_\cG |f(x)|^2 \, dx < \infty.
\end{equation}
Equipped with the norm \eqref{eq:L^2norm}, the $L^2$-space $L^2(\cG)$ is naturally a Hilbert space. For every edge $e \in \cE$, let $H^2(e) = H^2([0, \ell(e)])$ be the standard {\em second order Sobolev space} on the interval edge $e \cong [0, \ell(e)]$. By definition, $H^2(e)$ consists of all continuously differentiable functions $f_e \colon [0, \ell(e)] \to \C$  whose distributional second derivative is given by an $L^2$-function $f_e''$ in  $L^2(e) = L^2([0, \ell(e)])$.

Consider a function $f \colon\cG \to \C$ whose restriction $f_e := f|_e$ is twice continuously differentiable on each edge $e \in \cE$. Taking the {\em edgewise second derivative}
\begin{equation} \label{eq:Delta}
	\Delta f :=  (f''_e)_{e \in \cE},
\end{equation}
we obtain a function defined almost everywhere on the metric graph $\cG$ (namely except in the vertices). This definition still makes sense if the restrictions $f_e$ belong to the Sobolev space $H^2(e)$ for all $e \in \cE$, and the second derivatives in \eqref{eq:Delta} are taken in the distributional sense.

The domain of the {\em Kirchhoff Laplacian} is given by
\begin{align*}
\dom(\bH) := \{ f \in L^2(\cG) | \,  & \text{$f$ is continuous,  }\text{$f_e \in H^2(e)$ for all $e \in \cE$, } \\
&\text{$f$ satisfies \eqref{eq:kirchhoff} for all $v \in \cV$ } \text{and $\Delta f \in L^2(\cG)$} \},
\end{align*}
where the so-called {\em Kirchhoff conditions} at a vertex $v  \in \cV$ are given by
\begin{align}\label{eq:kirchhoff}
\begin{cases} f\ \text{is continuous at}\ v,\\[1mm] 
\sum\limits_{e \in\cE_v} f_e'(v) = 0, \end{cases} 
\end{align}
and $f_e'(v)$ denotes the derivative of $f$ in $v$ taken along the incident interval edge $e \in \cE_v$. Finally we define the {\em Kirchhoff Laplacian} $\bH$ by
\[
\bH f := - \Delta f, \qquad f \in \dom(\bH).
\]
Altogether, $\bH \colon \dom(\bH) \subseteq L^2(\cG) \to L^2(\cG)$ is an unbounded, densely defined and closed operator in the Hilbert space $L^2(\cG)$.

Note that, formally, we have introduced the Kirchhoff Laplacian $\bH$ by using a fixed model $(\cV, \cE, \ell)$ of the metric graph $\cG$. However, one easily verifies that the definition of $\bH$ is independent of the concrete choice of the model.

\subsection{Properties of the Kirchhoff Laplacian} In what follows, we are interested in the spectral properties of the operator $\bH$. One of the most basic questions, which underlies all subsequent spectral analysis, is the following: {\em Is $\bH$ self-adjoint}? That is, does $\bH = \bH^\ast$ hold for the adjoint operator $\bH^\ast$ of $\bH$? In the affirmative case, $\bH$ is a non-negative, self-adjoint operator in $L^2(\cG)$ and we need to understand the properties of its spectrum $\sigma(\bH) \subseteq [0, \infty)$. Here and below, we denote by
\[
\varrho(A) = \{ \lambda \in \C | (A - \lambda \operatorname{Id}) \text{ has a bounded inverse  defined everywhere on $\mathcal{H}$} \}
\]
the \emph{resolvent set} of a self-adjoint linear operator $A \colon \dom(A) \subseteq \mathcal{H} \to \mathcal{H}$ in a Hilbert space $\mathcal{H}$. Moreover, $\sigma(A) := \C \setminus \varrho(A)$ is the \emph{spectrum } of $A$, and we have $\sigma(A) \subseteq \R$.

\smallskip

Analogous to compact Riemannian manifolds, it is well-known that the Kirchhoff Laplacian $\bH$ on a finite (equivalently, compact) metric graph is self-adjoint and its spectrum $\sigma(\bH)$ is purely discrete (see e.g. \cite[Theorem 3.1.1]{bk13}). The spectrum consists of an infinite sequence $0 = \lambda_0 < \lambda_1 \le \lambda_2 \le \dots$ of eigenvalues of finite multiplicity tending to $+ \infty$ (by convention, eigenvalues are counted with their multiplicity in the sequence). The lowest eigenvalue is given by $\lambda_0 = 0$ and its eigenspace consists of the constant functions on $\cG$.  The latter holds since $\cG$ is connected by assumption.
Moreover, the eigenvalues satisfy {\em Weyl's law}
\begin{equation} \label{eq:Weyl}
\# \Big \{ k \in \N | \, \sqrt{\lambda_k} \le \lambda \Big \} = \frac{\vol(\cG)}{\pi}  \lambda + O(1), \qquad \lambda \to + \infty,
\end{equation}
where $\vol(\cG) = \sum_{e \in \cE}  \ell(e)$ is the total volume of $\cG$.

\smallskip

The properties of the Kirchhoff Laplacian $\bH$ on {infinite metric graphs} are more rich and less studied. This situation should be compared to the case of non-compact Riemannian manifolds. We stress that, in particular, self-adjointness may fail (see also Section~\ref{sec:Selfadjointness}) and clearly other types of spectra can occur.

\section{The trace formula for finite metric graphs} \label{sec:Traceformula}
In this section we review the {\em trace formula} for finite metric graphs. The trace formula was discovered independently by Roth \cite{roth} and Kottos--Smilansky \cite{kottossmilansky1, kottossmilansky2} (see also \cite{bk13, k08}). It connects the spectral data of the Kirchhoff Laplacian to geometric data stemming from the closed paths in the graph.

\smallskip

In order to state the result, we introduce the following notions. Let $\cG = (\cV, \cE, \ell)$ be a finite metric graph. By Euler's formula, the first Betti number $\mathfrak{g}$ of $\cG$ (which coincides with the cyclomatic number of $\cG_d = (\cV, \cE)$, that is, the dimension of the cycle space of $\cG_d$) is given by
\[
\mathfrak{g} = \# \cE - \# \cV +1.
\]
(Recall that, by assumption, $\cG$ is connected.) A {\em closed (combinatorial) path} in $\cG_d = (\cV,\cE)$ is a finite sequence $(v_i)_{i=0}^N$ of vertices such that each $v_i$ is connected to $v_{i+1}$ by an edge $e_{v_i, v_{i+1}}$ and, moreover, $v_0 = v_N$. Note that backtracking is allowed, meaning we may use an edge twice in a row, transversing it in opposite directions. A {\em periodic orbit} $p$ is a closed path up to forgetting which vertex is the starting point, that is, an equivalence class of closed paths obtained by cyclically permuting the $v_i$'s. The {\em set of periodic orbits} on $\cG_d$ is denoted by $\mathcal{P}$.

A periodic orbit $p \in \mathcal{P}$ is called {\em primitive}, if it can not be obtained by repeatedly transversing a shorter periodic orbit. Each periodic orbit $p \in \mathcal{P}$ is a repetition of a unique periodic orbit with minimal length, which is then primitive. The latter is called the {\em primitive part} of $p$ and denoted by $\operatorname{prim}(p)$.

To each periodic orbit $p \in \mathcal{P}$ (represented by a closed path $(v_i)_{i=0}^N$), we associate the following quantities:
\begin{itemize}
\item its {\em (arc) length} $\ell(p)$ in the metric graph $\cG$, given by $\ell(p) = \sum_{i=1}^N \ell(e_{v_{i-1}, v_{i}})$.
\item its {\em scattering coefficient} $s(p)$, given by the product of vertex scattering coefficients along the path,
\[
s(p) = \prod_{i=1}^N S_{v_i}(e_{v_{i-1}, v_{i}}, e_{v_{i}, v_{i+1}})
\]
(by convention, $v_{N+1} := v_1$). Here, for two edges $e,e'$ incident to a vertex $v \in \cV$, the vertex scattering coefficient is defined by
\[
S_v(e,e') =\frac{2}{\deg(v)} - \delta_{e,e'},
\]
where $\delta_{e,e'}$ is the Kronecker delta.

\end{itemize}

Let $\bH$ be the Kirchhoff Laplacian on $\cG$ and denote by $(\lambda_k)_{k=0}^\infty$ the increasing sequence of its eigenvalues (counted with multiplicity). Consider the following discrete measure on the real line (aka "counting measure of the wave spectrum")
\begin{equation} \label{eq:measure}
\mu = (\mathfrak{g}+1) \delta_0 + \sum_{k=1}^\infty \delta_{\sqrt{\lambda_k}} +  \delta_{- \sqrt{\lambda_k}},
\end{equation}
where $\delta_x$ is the Dirac measure centered at $x \in \R$. Taking into account Weyl's law~\eqref{eq:Weyl}, $\mu$ defines a tempered distribution. The trace formula expresses the Fourier transform of $\mu$ (taken in the distributional sense) in terms of the periodic orbits.

\begin{theorem}[see \cite{kottossmilansky2, k08, roth}]
Let $\cG = (\cV, \cE, \ell)$ be a finite metric graph. Then, in the sense of distributions,
\begin{equation} \label{eq:TraceFormula}
\widehat{ \mu} = 2 \vol(\cG)  \delta_0 + \sum_{p \in \mathcal{P}} s(p) \ell(\operatorname{prim}(p)) \Big ( \delta_{\ell(p)} + \delta_{- \ell(p)} \Big ),
\end{equation}
where $\vol(\cG) = \sum_{e \in \cE} \ell(e)$. That is, for all functions $f \colon \R \to \R$ belonging to the Schwartz space of rapidly decaying functions $\mathcal{S}(\R)$,
\begin{gather*}
(\mathfrak{g}+1) \hat f(0) + \sum_{k=1}^\infty \hat f({\sqrt{\lambda_k}}) + \hat f(- \sqrt{\lambda_k})  \\=  2\vol(\cG)  f (0)  + \sum_{p \in \mathcal{P}}    s(p) \ell(\operatorname{prim}(p))    \Big (  f(\ell(p)) +  f(- \ell(p)) \Big ),
\end{gather*}
where the Fourier transform is normalized as $\hat f (\xi) = \int_\R e^{- i \xi x} f(x) dx$, $\xi \in \R$.
\end{theorem}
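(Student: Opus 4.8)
The plan is to reduce \eqref{eq:TraceFormula} to the \emph{secular equation} of $\cG$ and then expand a logarithmic determinant into a sum over closed walks, following Kottos--Smilansky. Fix $k>0$ and put $\lambda=k^2$. On each edge the equation $-f_e''=k^2f_e$ is solved by the plane waves $e^{\pm\I kx}$, so an eigenfunction is encoded by $2\#\cE$ amplitudes, one per oriented edge (``bond''); propagation along an edge $e$ multiplies an amplitude by $e^{\I k\ell(e)}$, while the Kirchhoff conditions \eqref{eq:kirchhoff} at a vertex $v$ mix the incident amplitudes through the vertex scattering matrix with entries $S_v(e,e')=\tfrac{2}{\deg(v)}-\delta_{e,e'}$. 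Collecting these into a single unitary \emph{quantum evolution operator} $U(k)$ on $\C^{2\#\cE}$ -- the product of the global vertex scattering matrix $S$ and the diagonal matrix of edge phases -- one verifies in the standard way that $\lambda=k^2>0$ lies in $\sigma(\bH)$ precisely when $\det(\id-U(k))=0$, with multiplicities preserved; the zero mode needs separate treatment and, as explained below, is counted with multiplicity $\mathfrak{g}+1$. Conceptually $\widehat\mu$ is the trace of the wave group $\cos(t\sqrt{\bH})$, whose finite propagation speed is the reason only closed orbits contribute.

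The core identity is the expansion
\[
\log\det(\id-U(k))=-\sum_{n=1}^\infty\frac{1}{n}\operatorname{tr}U(k)^n .
\]
Since $U(k)_{b',b}\neq 0$ only when the bonds $b,b'$ meet head-to-tail, the trace $\operatorname{tr}U(k)^n$ sums over closed combinatorial paths with $n$ edges; along such a path the scattering entries multiply to the coefficient $s(\cdot)$ and the phases to $e^{\I k\ell(\cdot)}$, so $\operatorname{tr}U(k)^n=\sum_{c}s(c)e^{\I k\ell(c)}$. The decisive combinatorial point is the passage from rooted paths to periodic orbits: an orbit $p$ that is the $r$-fold repetition of $\operatorname{prim}(p)$ has exactly $n(p)/r$ distinct rooted representatives, all sharing the same length and scattering coefficient, so the prefactor $\tfrac1n$ collapses to $\tfrac1r$ and
\[
\log\det(\id-U(k))=-\sum_{p\in\mathcal{P}}\frac{s(p)}{r(p)}e^{\I k\ell(p)} .
\]
Differentiating in $k$ and using $\ell(p)/r(p)=\ell(\operatorname{prim}(p))$ produces exactly the weights of \eqref{eq:TraceFormula}:
\[
\partial_k\log\det(\id-U(k))=-\I\sum_{p\in\mathcal{P}}s(p)\,\ell(\operatorname{prim}(p))\,e^{\I k\ell(p)} .
\]

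It remains to convert this into the distributional identity. Taking $f\in\mathcal{S}(\R)$ with compact support (so $\hat f$ is entire of exponential type, the general case following by density), the argument principle identifies the spectral side $\langle\mu,\hat f\rangle=(\mathfrak{g}+1)\hat f(0)+\sum_{k\ge1}(\hat f(\sqrt{\lambda_k})+\hat f(-\sqrt{\lambda_k}))$ with a contour integral
\[
\langle\mu,\hat f\rangle=\frac{1}{2\pi\I}\oint_{\Gamma}\hat f(k)\,\partial_k\log\det(\id-U(k))\,dk ,
\]
where $\Gamma$ surrounds the real zeros of $\det(\id-U(\cdot))$, the zero at $k=0$ entering with its order $\mathfrak{g}+1$. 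Pushing $\Gamma$ into $\{\operatorname{Im}k>0\}$, where the previous series converges absolutely, the integral collapses through the inverse transform $\tfrac1{2\pi}\int_\R\hat f(k)e^{\I k\ell(p)}\,dk=f(\ell(p))$ and the reflection symmetry $U(-k)=\overline{U(k)}$ to $\sum_{p}s(p)\ell(\operatorname{prim}(p))(f(\ell(p))+f(-\ell(p)))$. The Weyl term survives from the asymmetry of $\partial_k\log\det(\id-U(k))$ between the half-planes: from $\det U(k)=\det(S)\,e^{2\I k\vol(\cG)}$ one reads off that this logarithmic derivative tends to $2\I\vol(\cG)$ as $\operatorname{Im}k\to-\infty$ and to $0$ as $\operatorname{Im}k\to+\infty$, and this mismatch contributes precisely $2\vol(\cG)f(0)$, i.e. the atom $2\vol(\cG)\delta_0$.

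The delicate points, where the genuine work lies, are concentrated in the last paragraph. First, the behaviour at $k=0$: one must show that $\det(\id-U(k))$ vanishes there to order exactly $\mathfrak{g}+1$, equivalently that $+1$ is an eigenvalue of the scattering matrix $U(0)=S$ of multiplicity $\mathfrak{g}+1$; this is the structural reason the constant eigenfunction, of geometric multiplicity one, is nonetheless recorded in $\mu$ with weight $\mathfrak{g}+1$, and it is exactly this weight that makes the trace formula close up. Second, the orbit series diverges on the real axis and must be read as a boundary value of the function analytic on $\{\operatorname{Im}k>0\}$, where convergence holds beyond the topological entropy; the exponential growth of the number of orbits of bounded length is tamed by the rapid decay of $\hat f$, and the passage from compactly supported $f$ to general $f\in\mathcal{S}(\R)$ is by approximation. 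Finally one must justify the deformation of $\Gamma$ -- confirming that the only real zeros of $\det(\id-U(\cdot))$ are the $\pm\sqrt{\lambda_k}$ and $0$, treating separately the exceptional values with $k\ell(e)\in\pi\Z$ where the secular count of multiplicities needs extra care, and checking that no stray zeros obstruct the contour shift. These verifications are the technical heart and are carried out in \cite{roth, kottossmilansky2, k08, bk13}.
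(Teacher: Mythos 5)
First, a structural remark: the paper does not prove this theorem at all --- it is stated as a known result with the citations \cite{roth, kottossmilansky2, k08}, so there is no internal proof to compare yours against. Your proposal follows the same route as two of those sources (the secular-equation/periodic-orbit method of Kottos--Smilansky and Kurasov; Roth's original argument went through the heat kernel instead), and its skeleton is sound: the equivalence $\lambda=k^2\in\sigma(\bH)\setminus\{0\}\Leftrightarrow\det(\id-U(k))=0$ with matching multiplicities, the expansion $\log\det(\id-U(k))=-\sum_n\tfrac1n\operatorname{tr}U(k)^n$ for $\operatorname{Im}k$ large, the count of $n(p)/r(p)$ rooted representatives per orbit (which is exactly what converts the prefactor $1/n$ into the weight $\ell(\operatorname{prim}(p))$ after differentiating in $k$), the identity $\det U(k)=\det(S)\,e^{2\I k\vol(\cG)}$ producing the Weyl atom $2\vol(\cG)\delta_0$ from the asymmetry between the two half-planes, and the order-$(\mathfrak{g}+1)$ zero of the secular determinant at $k=0$ are all correct and are precisely the ingredients of the published proofs. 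The points you label ``the technical heart'' (secular multiplicities equal spectral multiplicities, including at the exceptional values $k\ell(e)\in\pi\Z$; $\dim\ker(\id-S)=\mathfrak{g}+1$ together with the fact that the order of the $k$-zero equals this dimension; justification of the contour deformation) are indeed where the real work lies, and you defer them wholesale to the literature; as a self-contained proof this is a gap, though a defensible one given that the paper under review itself only cites these same sources.

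One assertion, however, is wrong as stated and needs repair: the exponential growth of the number of orbits is \emph{not} tamed by the rapid decay of $\hat f$. Schwartz decay is only required to be superpolynomial, while the number of periodic orbits of length at most $L$ grows exponentially in $L$; for a $4$-regular graph every scattering coefficient has modulus exactly $\tfrac12$, so the absolute sum $\sum_{p}|s(p)|\,\ell(\operatorname{prim}(p))\,|f(\ell(p))|$ behaves like $\sum_n 2^n |f(c\,n)|$ and diverges for Schwartz functions with, say, decay $e^{-\sqrt{|x|}}$. The correct mechanism is different: for compactly supported $f$ the orbit sum is \emph{finite} (only finitely many orbits have length in $\operatorname{supp}f$), so no convergence question arises at that stage; the extension to all of $\mathcal{S}(\R)$ by density then requires showing that the geometric side defines a tempered distribution, which rests on cancellation --- polynomial bounds on the grouped coefficients $\sum_{\ell(p)=L}s(p)\ell(\operatorname{prim}(p))$, obtained from unitarity of $U$ (equivalently, from the already-established identity for compactly supported test functions together with Weyl's law \eqref{eq:Weyl}) --- and not on the pointwise decay of $\hat f$. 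With the last paragraph rewritten along these lines, your outline matches the standard proof in the cited references.
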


The trace formula is a cornerstone for many developments around spectral theory of metric graphs, see e.g. \cite{bk13, gnutzmannsmilansky, gs,  kottossmilansky2} and the references therein. In the following, we briefly discuss three different directions from which the result can be viewed.

\subsection{Crystalline measures}  Applying the trace formula \eqref{eq:TraceFormula} to a circle $C$ of length one we recover a well-known result - the {\em Poisson summation formula}
\begin{equation} \label{eq:Poisson}
	\sum_{k \in \Z} \hat f( 2 \pi k) = \sum_{k \in \Z} f(k),
\end{equation}
which holds for all functions $f$ in the Schwartz space $\mathcal{S}(\R)$. Indeed, the circle $C$ can be viewed as a metric graph (e.g., an $n$-cycle with all edge lengths equal to $1/n$) and the spectrum of the Kirchhoff Laplacian is generated by an arithmetic progression $\sigma(\bH) = \{ (2\pi k)^2| \, k \in \Z \}$, where all non-zero eigenvalues have multiplicity two.

One may then pose the following question:
\begin{center} {\em What kind of "Poisson formulas" exist?}
\end{center} This leads to the following notion: a {\em crystalline measure} is a measure of the form $\mu = \sum_{x \in X} a_x \delta_x$ on $\R$ (with real or complex coefficients $a_x$) such that $\mu$ is a tempered distribution, its Fourier transform is again of the form $\hat \mu = \sum_{\lambda \in \Lambda} b_\lambda \delta_\lambda$ and the sets $X, \Lambda \subseteq \R$ are discrete \cite{meyer} (see also \cite[page 215]{dyson} for its connection to the Riemann hypothesis). By \eqref{eq:Poisson}, arithmetic progressions lead to crystalline measures, and conversely, it is known that every crystalline measure satisfying additional conditions stems from arithmetic progressions. On the other hand, Kurasov and Sarnak  recently obtained a detailed description of the arithmetic structure of metric graph spectra, which, naively speaking, says that generically $\sigma(\bH)$ is very far from an arithmetic progression \cite{ks, sarnaktalk}. In particular, for almost every metric graph, the measure $\mu$ in \eqref{eq:measure} provides a rather "unexpected" example of a crystalline measure (note that the supports of $\mu$ and $\hat \mu$ are discrete since $\sigma(\bH) \subseteq\R$ is discrete and there are only finitely many periodic orbits with length smaller than a fixed number). Whereas there are several other constructions of crystalline measures (see the references in \cite{ks, meyer21, ou20, radchenkoviazovska}), metric graphs provide the first non-trivial examples of so-called positive Fourier quasi-crystals (note that all coefficients of Dirac measures in \eqref{eq:measure} are positive) and have additional properties answering several open questions \cite{ks}. For further information and references, we refer to the recent works \cite{ks, meyer21, ou20}.

\subsection{Metric graphs and Riemann surfaces} Another viewpoint on analysis on metric graphs stems from the appearance of metric graphs (sometimes under the name tropical curve) in algebraic and tropical geometry, see e.g.\ \cite{amini, bj, baru10, dv}. For instance, they appear in context with \emph{degenerating families of Riemann surfaces}. In simplified terms, one considers a "nice" family $X_t$, $t >0$, of compact Riemann surfaces $X_t$ (these are analytifications of smooth complex algebraic curves), which in the limit $t \to 0$ converge to a nodal Riemann surface $X_0$. The latter corresponds to a singular curve with the mildest type of singularities. The so-called \emph{dual graph} $\cG_d = (\cV, \cE)$ of the singular Riemann surface $X_0$ captures its combinatorics: the vertices are the irreducible components of $X_0$ and the edges between two vertices correspond to intersection points of those two irreducible components. The graph $\cG_d$ can be equipped with suitable edge lengths describing the degeneration, leading to a metric graph $\cG$ (see Figure~\ref{fig:RiemannSurfacesDegeneration}).
\begin{figure} \label{fig:RiemannSurfacesDegeneration}
\begin{center}
\includegraphics[width= 0.6 \textwidth]{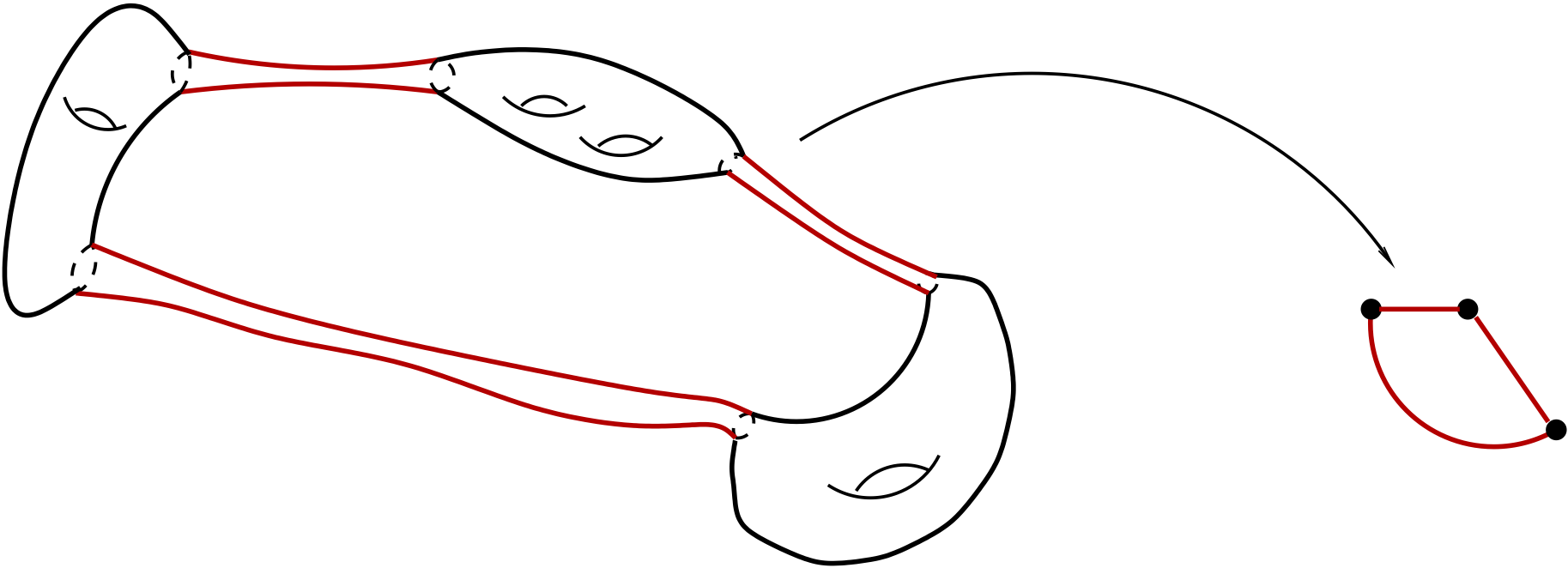}
\end{center}
\caption{A Riemann surface $X_t$, which is close to being singular, and a metric graph emerging from the degeneration in the limit.}
\end{figure}

Many classical algebraic-geometric and complex-analytic results on Riemann surfaces have a metric graph counterpart, see e.g.\ \cite{bano07, bano09, dv, mukzha}. For works studying the convergence of analytic objects on degenerating Riemann surfaces to their metric graph versions from an analytic point of view, we only refer to some very recent papers \cite{an20, an22, dejong, shivaprasad, wilms21} and the references therein.

Turning to spectral theory, from this perspective it seems natural to view the trace formula for compact metric graphs as an analog of {\em Selberg's trace formula for compact hyperbolic Riemann surfaces}, which relates the eigenvalues of the Laplace--Beltrami operator to closed oriented geodesics and their lengths \cite{iwaniec}. Note however that, when the Riemann surfaces degenerate, their smallest eigenvalues converge (after rescaling) to eigenvalues of a Laplacian on a discrete  (not metric!) graph \cite{colbois, colboiscdV}. Another interesting spectral parallel concerns an inequality by Yang--Yau \cite{yangyau}, which estimates the smallest non-zero eigenvalue of a compact Riemann surface by its gonality (the minimum degree of a holomorphic map to the Riemann sphere $\mathbb{P}^1$). Analogs for metric and discrete graphs were obtained in \cite{ak, ckk}, where gonality can be defined using harmonic maps from graphs to trees \cite{dv} (trees provide graph analogs of $\mathbb{P}^1$ since they are the graphs with first Betti number $\mathfrak{g} = 0$).

\subsection{Inverse spectral problems} The trace formula turns out to be useful in context with {\em inverse problems} in spectral geometry of metric graphs. Recall that the famous question {\em "Can One Hear the Shape of a Drum?"} (e.g.,  the title of a well-known article by M.\ Kac \cite{kac}) asks if a geometric object is uniquely determined by the spectrum of an associated (Laplacian) operator. A possible metric graph version of this question reads as follows: {\em "does the spectrum $\sigma(\bH)$ of the Kirchhoff Laplacian determine the underlying finite metric graph $\cG$?"} In case that the edge lengths $\ell(e)$, $e \in \cE$, are linearly independent over $\mathbb{Q}$, this indeed turns out to be the case (see \cite{gs, kn05}). The proofs in \cite{gs, kn05} are direct applications of the trace formula, and essentially recover $\cG$ from the right-hand side of \eqref{eq:TraceFormula}. However, analogous to Laplacians on Riemannian manifolds or discrete graphs, the answer turns out to be negative for general metric graphs. We refer to \cite[Section 7.1]{bk13} for further references on constructions of isospectral metric graphs and related problems.

%%%%%%%%%%%%%%%%%%%%%%%%%%%%%%%%%%%%%%%%%%
\section{Self-adjointness} \label{sec:Selfadjointness}
%%%%%%%%%%%%%%%%%%%%%%%%%%%%%%%%%%%%%%%%%%

The aim of this section is to discuss self-adjointness and Markovian uniqueness of the Kirchhoff Laplacian $\bH$ on a metric graph $\cG = (\cV, \cE, \ell)$.

\subsection{Self-adjointness and Markovian uniqueness} We begin by recalling basic facts about self-adjointness and Markovian uniqueness (see also \cite[Chapter 1]{ebe}).

\smallskip

 {\em Self-adjointness} is the first mathematical problem arising in any quantum mechanical model (see, e.g., \cite[Chap.~VIII.11]{RSI}). Namely, usually a formally symmetric expression for the Hamiltonian has some natural domain of definition in a given Hilbert space $\mathcal{H}$ and then one has to verify that it gives rise to a self-adjoint operator $A \colon \dom(T) \subseteq \mathcal{H} \to \mathcal{H}$, that is, the equality $A^\ast = A$ holds \footnote{We recall that, for a densely defined operator $A \colon \dom(A) \subseteq \mathcal{H} \to \mathcal{H}$ in a Hilbert space $\mathcal{H}$, its adjoint operator $A^\ast \colon \dom(A^\ast) \subseteq \mathcal{H} \to  \mathcal{H}$ is defined by introducing the domain of definition $\dom(A^\ast) = \{g \in \mathcal{H}| \, \exists h \in \mathcal{H} \text{ with }(Af, g)_{\mathcal{H}} = (f, h)_{\mathcal{H}} \forall f \in \dom(A) \}$ and then setting $A^\ast g := h$ for $g \in \dom(A^\ast)$.}. This property is closely connected to the Cauchy problem for the {\em Schr\"odinger equation}
\begin{align}\label{eq:I.schrod}
\I \partial_t u(t) & = A u(t), & u|_{t=0} &= u_0 \in \mathcal{H}.
\end{align}
It is exactly the self-adjointness of $A$ which ensures the existence and uniqueness of solutions to \eqref{eq:I.schrod}. Otherwise, there are infinitely many self-adjoint restrictions\footnote{Here we suppose that $A_0  := A^\ast$ is a symmetric operator with equal deficiency indices, which holds true in our context. Note also that our formulation is slightly non-standard, since we consider the maximal operator $A$. Equivalently and perhaps more common, one can start with a closed, symmetric operator $A_0 $ (the minimal operator) and ask for its self-adjoint extensions, which are exactly the self-adjoint restrictions of $A = A_0^\ast$. In our context the minimal operator $\bH_0 = \bH^\ast$ is obtained by restricting $\bH$ to compactly supported functions and taking the closure in $L^2(\cG)$. However, in order to keep the exposition short, we do not to introduce this additional operator.} $\wt A \colon \dom(\wt A) \subseteq \mathcal{H} \to \mathcal{H}$ (obtained by restricting $A$ to suitable smaller domains $\dom(\wt A) \subseteq \dom( A)$) and one has to choose the right one modeling the phenomenon in question, the so-called observable. In the context of differential operators on a geometric domain with boundary, this often corresponds to imposing {\em boundary conditions}. On the other hand, a self-adjoint operator has no non-trivial self-adjoint restrictions, and for this reason self-adjointness is sometimes also called {\em self-adjoint uniqueness}. For instance, the Laplacian on a bounded domain $\Omega \subseteq \R^N$ is not self-adjoint. Imposing Dirichlet or Neumann boundary conditions, respectively, leads to two different self-adjoint restrictions and hence Schrödinger equations. 
Naively speaking, self-adjointness of the Kirchhoff Laplacian $ \bH$ in the $L^2$-space $ L^2(\cG)$ means that, in the sense of quantum mechanics, there is a \emph{"unique meaningful Schrödinger equation on $\cG$"} (associated to $\bH$).

\smallskip

On the other hand, when studying a {\em diffusion process} on some metric measure space $X$  (e.g., Brownian motion on a Riemannian manifold), one is led to the Cauchy problem for the {\em heat equation} 
\begin{align}\label{eq:heat}
\partial_t u(t) & = A u(t), & u|_{t=0} &= u_0 \in L^2(X).
\end{align}
In order to ensure that solutions have properties reflecting heat diffusion, one considers \eqref{eq:heat} for so-called {\em Markovian operators}. More precisely, we require that  $A \colon \dom(A) \subseteq L^2(X) \to L^2(X)$ is self-adjoint, non-negative and the semigroup $(\E^{- t A})_{t>0}$ is positivity preserving and $L^\infty$ contractive (i.e., for a function $0 \le f \le 1$ in $L^2(X)$, we have $0 \le \E^{- t A}f \le 1$ for all $t >0$). By the Beurling--Deny criteria, $A$ is Markovian exactly when its quadratic form is a {\em Dirichlet form} (see e.g.\ \cite{dav89}).  

Analogous to Laplacians on Riemannian manifolds and discrete graph Laplacians, the Kirchhoff Laplacian $\bH$ on a metric graph $\cG$ has the following properties. If $\bH$ is self-adjoint, it is automatically Markovian. If $\bH$ is not self-adjoint, then it has both Markovian and non-Markovian self-adjoint restrictions. By {\em Markovian uniqueness} we mean that the Kirchhoff Laplacian $\bH$ has a unique Markovian restriction\footnote{Equivalently, the corresponding minimal Kirchhoff Laplacian $\bH_0 = \bH^\ast$ has a unique Markovian extension.}. Self-adjointness clearly implies Markovian uniqueness, but the opposite direction does not hold. Naively speaking, Markovian uniqueness means that, in the sense of diffusion processes, there is a \emph{"unique meaningful heat equation on $\cG$"} (associated to $\bH$).

\subsection{Self-adjointness criteria}\label{ss:SelfadjointnessCriteria} We begin by reviewing sufficient conditions for self-adjointness. Recall that in the natural path metric $\varrho_0$ on the metric graph $\cG$, the distance $\varrho_0(x,y)$ between two points $x,y \in \cG$ is the arc length of the shortest continuous path connecting them (see Section~\ref{ss:MetricGraphs}). The so-called {\em star metric} $\varrho_m$ is obtained by changing the arc length of an edge $e_{u,v}$ connecting $u,v \in \cV$ to 
\[
\ell_m(e_{u,v}) := m(u) + m(v) :=  \sum_{e \in \cE_u} \ell(e) + \sum_{e \in \cE_v} \ell(e),
\]
and again taking lengths of shortest connecting paths (now w.r.t.\ to $\ell_m$).

The following conditions imply that the Kirchhoff Laplacian $\bH$ is self-adjoint:
\begin{itemize}
\item [(i)] $\cG$ is finite
\item [(ii)] edge lengths are bounded from below, $\inf_{e \in \cE} \ell(e) > 0$ (\cite[Theorem 1.4.19]{bk13})
\item [(iii)] ($\cG, \varrho_0)$ is a complete metric space (\cite[Theorem 3.49]{hae} or  \cite[Corollary 4.9]{ekmn})
\item [(iv)] $(\cG, \varrho_m)$ is a complete metric space (\cite[Theorem 4.11]{ekmn})
\end{itemize}
It is easily shown that each condition  is weaker than the previous one. Condition (iii) is an analog of a result for manifolds, which ensures that the Laplace--Beltrami operator on complete Riemannian manifolds is self-adjoint \cite{roe}, whereas condition (iv) does not seem to have a manifold counterpart (for further discussion, see also \cite[Section 7]{kn}). On the other hand, simple examples show that none of the above conditions are necessary and obtaining a complete characterization of self-adjointness is probably quite difficult, see e.g. \cite[Remark 4.12 and Section~7]{kmn22}.

\subsection{Markovian uniqueness and graph ends}
In contrast to self-adjointness, Markovian uniqueness on metric graphs admits an explicit geometric characterization. In the remainder of the section, we describe the results of Kostenko, Mugnolo and the author from \cite{kmn22} (see also \cite{kn21} and \cite[Section 7.2]{kn}). In what follows, let $\cG = (\cV, \cE, \ell)$ be an infinite metric graph (as discussed above, for finite metric graphs, self-adjointness and hence Markovian uniqueness always holds).

\smallskip

\begin{figure}[h!]  \label{fig:GraphEnds}
\begin{center}
%%%%%%%%%%%
\begin{minipage}{.3 \textwidth}
		\centering	
	\begin{tikzpicture} [scale = .35]
	%%%%%%%%% GRID Z^2
	%%%% PLACING NODES
	\foreach \x in {0,...,4}
	{ \filldraw  (-4 + \x*2, 0) circle (3 pt);  }
	%%% PLACING EDGES
	\foreach \x in {0,...,3}{
		\draw (-4 + 2* \x, 0) -- (-2 + 2* \x, 0);
	}
	\draw [dotted] (-5  , 0) -- (-4, 0);
	\draw [dotted] (4 , 0) -- (5,  0);
	%%%LABELING
	%\node at (8, 5) {$\cG = \Z^2$};
\end{tikzpicture}
\end{minipage} 
%%%%%%%%%
\begin{minipage}{.25 \textwidth}
	\centering
	\begin{tikzpicture} [scale = .3]
	%%%%%%%%% GRID Z^2
	%%%% PLACING NODES
	\foreach \x in {0,...,4}
	\foreach \y in {0,...,3}
	{ \filldraw  (-4 + \x*2, 2*\y) circle (3 pt);  }
	%%% PLACING EDGES
	\foreach \y in {0,...,3}{
		\draw (-4, 2* \y) -- (4, 2* \y) ;
		\draw   (-5, 2* \y) --(-4, 2* \y);
		\draw   (4, 2* \y) -- (5, 2* \y);
	}
\foreach \x in {0,...,4}{
		\draw (-4 + 2* \x, 0) -- (-4 + 2* \x, 6);
		\draw  (-4 + 2* \x, -1) -- (-4 + 2* \x, 0);
		\draw   (-4 + 2* \x, 6)  -- (-4 + 2* \x, 7);
	}
	%%%LABELING
	%\node at (8, 5) {$\cG = \Z^2$};
	\end{tikzpicture}\end{minipage}
	%%%%%%
\begin{minipage}{.35 \textwidth}
\centering
\includegraphics[width=0.75 \textwidth]{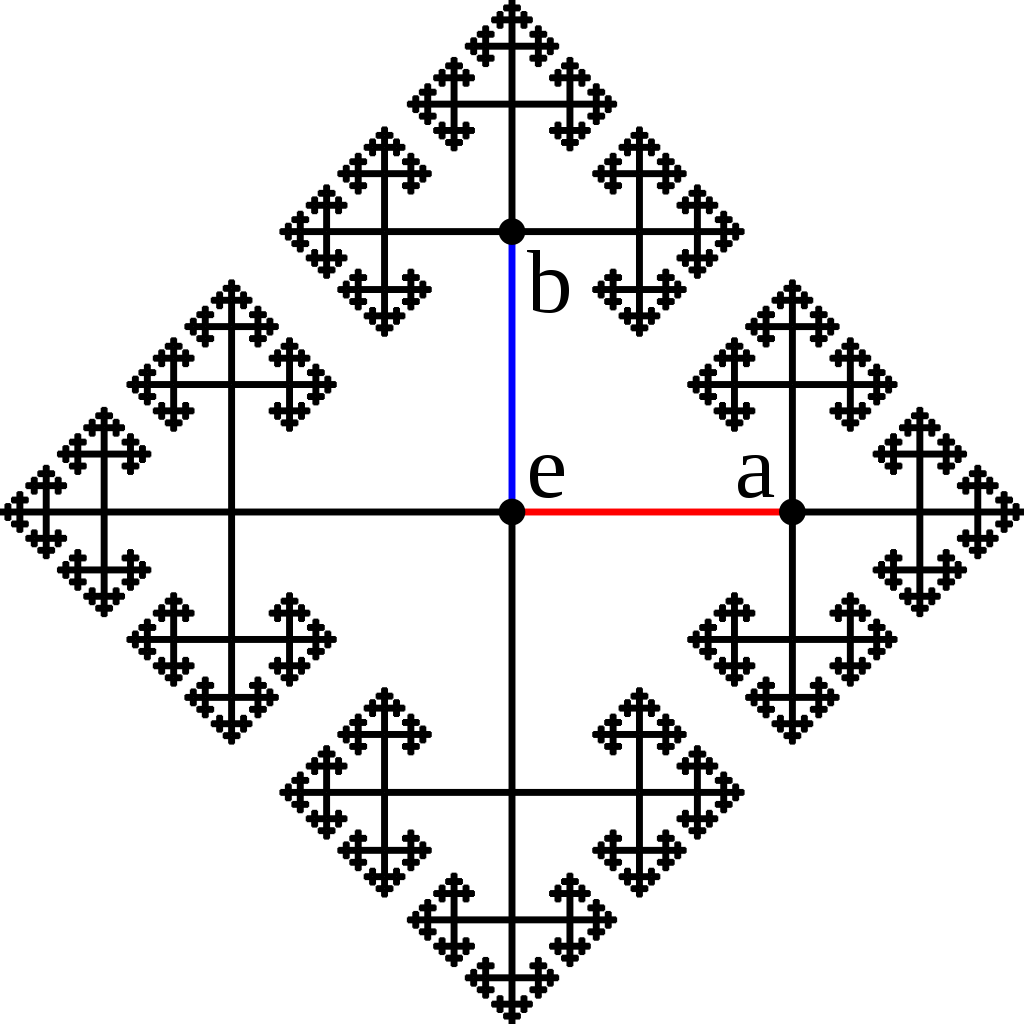}
\end{minipage}
\end{center}
\caption{The three infinite graphs $\cG_d = \Z$, $\cG_d = \Z^2$ and $\cG_d = \mathcal{T}_4$ (the infinite $4$-regular tree). They have two, one and infinitely many ends, respectively.}\label{fig:GraphEnds}
\end{figure}

The approach is based on the notion of {\em ends of metric graphs}. A result by Freudenthal says that every (nice) topological space $X$ can be compactified by adding its so-called topological ends \cite{f31}. For an infinite metric graph $\cG$, these can be defined as follows:  Fix an exhausting sequence $\cG_0 \subset \cG_1 \subset \cG_2 \subset \dots$ of $\cG$ consisting of finite metric subgraphs $\cG_n$, $n \in \N$. A {\em topological end} $\gamma$ of $\cG$ is an infinite decreasing sequence $U_0 \supset U_1 \supset U_2 \dots$ of open subsets $U_n \subseteq \cG$ such that each $U_n$ is a non-compact connected component of $\cG \setminus \cG_n$ (different exhaustions $(\cG_n)_{n \in \N}$ lead to equivalent notions).

The topological ends of $\cG = (\cV, \cE, \ell)$ also admit a combinatorial description in terms of the underlying combinatorial graph $\cG_d = (\cV, \cE)$.  We define a {\em ray} in $\cG_d$ as an infinite sequence of distinct vertices $(v_n)_{n\in\N}$  such that $v_{n}\sim v_{n+1}$ for all $n\in\N$. Two rays $\cR_1,\cR_2$ are called \emph{equivalent}, if there is a third ray meeting both $\cR_1$ and $\cR_2$ infinitely many times. A \emph{graph end of  $\cG_d$} is an equivalence class of rays (see Figure~\ref{fig:GraphEnds}). It turns out that the topological ends of $\cG$ and the graph ends of $\cG_d$ are in bijection: vaguely speaking, for each graph end there is a unique topological end $\gamma = (U_n)_{n \in \N}$ such that the respective rays end up in the sets $U_n$ (see \cite[Section 8.6]{die} or \cite[Section 21]{woe} for details).

Abusing the notation, in the following we will not distinguish between the topological ends of $\cG$ and the graph ends of $\cG_d$, and simply speak of the {\em ends} of $\cG$.

\smallskip

From  the historical point of view, ends of graphs were introduced independently by Freudenthal and Halin. They play an important role in the study of infinite graphs \cite[Chapter~8]{die} and provide one of the simplest boundary notions for compactifications of infinite graphs \cite{woe}. On the other hand, the origins of the notion are closely connected to group theory and the investigations of Freudenthal and Hopf \cite{f31, f44, hop}. Recall that, given a finitely generated group $\mathsf{G}$ and a finite symmetric generating set $S$, the {\em Cayley graph} $\mathcal{C}(\mathsf{G}, S)$ is the graph with vertex set $\cV = \mathsf{G}$ and two elements $x,y \in \mathsf{G}$ are neighbors exactly when $xy^{-1} \in S$. By a result going back to Freudenthal and Hopf, a Cayley graph of an infinite group has either one, two or infinitely many ends, and, moreover, the number of ends is independent of the generating set $S$. A classification of the respective cases in terms of group properties was completed later by Stallings, see e.g.  \cite[Chapter 13]{geog}. For example, the graphs depicted in Figure~\ref{fig:GraphEnds} are Cayley graphs of the groups $\mathsf{G} = \Z$, $\mathsf{G} = \Z^2$ and $\mathsf{G} = \mathbb{F}_2$ (the free nonabelian group of rank two).

\smallskip

In order to take into account the metric (not just topological or combinatorial) structure of the metric graph $\cG$, we introduce the following notion.  An end $\gamma$ of $\cG$ has {\em infinite volume}, if $\operatorname{\vol}(U_n) = \infty$ for all $U_n$ in the corresponding sequence of sets $(U_n)_{n \in \N}$ (this property is independent of the choice of the exhausting sequence $(\cG_n)_{n \in \N}$). Otherwise, $\gamma$ is said to have {\em finite volume}. Here $\operatorname{\vol}(A)$ denotes the Lebesgue measure of a measurable set $A \subseteq \cG$. 

This leads to the following characterization of Markovian uniqueness.

\begin{theorem}[\cite{kmn22}]
Let $\cG = (\cV, \cE, \ell)$ be an infinite metric graph. Then Markovian uniqueness holds if and only if all ends of $\cG$ have infinite volume.
\end{theorem}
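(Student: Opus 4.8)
The plan is to pass to the language of Dirichlet forms, reduce Markovian uniqueness to the equality of two form domains, and then turn that equality into a concrete ODE/flux computation along the ends of $\cG$.

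\emph{Step 1: reduction to equality of form domains.} Let $H^1(\cG)$ denote the \emph{maximal} energy form domain, i.e.\ the continuous functions $f\in L^2(\cG)$ with $f_e\in H^1(e)$ for all $e\in\cE$ and finite Dirichlet energy $\int_\cG|f'|^2<\infty$, and let $H^1_0(\cG)$ be the closure of the compactly supported such functions in the norm $\|f\|_{H^1}^2=\int_\cG|f'|^2+\int_\cG|f|^2$. On both spaces $f\mapsto\int_\cG|f'|^2$ is a Dirichlet form, so by the Beurling--Deny criteria together with Silverstein's theory of Markovian extensions, the Markovian restrictions of $\bH$ are exactly the self-adjoint operators whose forms are squeezed between the Friedrichs realization (form domain $H^1_0(\cG)$) and the reflected realization (form domain $H^1(\cG)$). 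Hence Markovian uniqueness is equivalent to $H^1_0(\cG)=H^1(\cG)$, and the whole statement reduces to a geometric characterization of this equality.

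\emph{Step 2: identifying the defect space.} Next I would compute the orthogonal complement of $H^1_0(\cG)$ inside $H^1(\cG)$ with respect to $\langle\cdot,\cdot\rangle_{H^1}$. A function $h$ lies in this complement iff $\int_\cG h'\overline{g'}+\int_\cG h\,\overline{g}=0$ for all compactly supported $g$; edgewise integration by parts turns this into $-h''+h=0$ on every edge together with the Kirchhoff conditions at every vertex. Since then $h''=h\in L^2(\cG)$, such $h$ lie in $\dom(\bH)$ and satisfy $\bH h=-h$, so that
\[
H^1(\cG)\ominus H^1_0(\cG)\;\cong\;\ker(\bH+1).
\]
Thus Markovian uniqueness holds iff $\ker(\bH+1)=\{0\}$, i.e.\ iff the only $L^2$ solution of $-h''+h=0$ obeying the Kirchhoff conditions is $h\equiv0$ (here $\bH\ge0$ on $H^1(\cG)$, which is why $-1$ is a convenient spectral point).

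\emph{Step 3: a flux identity at the ends.} The central tool is integration by parts along the exhaustion $\cG_0\subset\cG_1\subset\cdots$. For $h\in\ker(\bH+1)$ this yields, on each truncation,
\[
\int_{\cG_n}|h'|^2+\int_{\cG_n}|h|^2=\Phi_n(h):=\sum_{e\in\partial\cG_n}h_e'(x_e)\,\overline{h_e(x_e)},
\]
where $\Phi_n(h)$ is the flux of $h$ through the finitely many edges cut by $\partial\cG_n$. Letting $n\to\infty$ gives $\|h'\|^2+\|h\|^2=\sum_\gamma\mathcal F_\gamma(h)$, a sum of nonnegative end-fluxes. If every end has infinite volume I would show $\mathcal F_\gamma(h)=0$ for each $\gamma$: solving $-h''+h=0$ edgewise yields $\cosh/\sinh$ pieces, and the requirement $h\in L^2(\cG)$ forces, along an infinite-volume end, the tail contribution to be summable, which in turn forces the flux through deep cuts to vanish; consequently $\|h'\|^2+\|h\|^2=0$, so $h\equiv0$ and $\ker(\bH+1)=\{0\}$. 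Conversely, if some end $\gamma_0$ has finite volume then every ray towards $\gamma_0$ has summable length, so $L^2$-integrability on the tail is automatic; one can then solve the edgewise equation by choosing a bounded nonzero branch towards $\gamma_0$ and the decaying branches at all other ends, matching them through the core via the resulting finite linear system. This produces a nonzero $h\in\ker(\bH+1)$, whence $H^1_0(\cG)\subsetneq H^1(\cG)$ and Markovian uniqueness fails.

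\emph{Main obstacle.} The delicate point --- and the reason the threshold is \emph{volume} rather than metric distance or effective resistance --- is the asymptotic analysis of the $\cosh/\sinh$ solutions along a general, possibly branching, end: one must prove precisely that $L^2$-summability of $h$ over the tail $U_n$ is equivalent to $\vol(U_n)=\infty$, since it is the edge-length weights entering the $L^2$-norm that tie the flux to the volume. Establishing the vanishing of $\mathcal F_\gamma$ for every infinite-volume end (uniformly enough to sum over possibly infinitely many ends), together with the global solvability of the matching problem in the finite-volume case, is where the real work lies; the remaining steps are formal once this end-by-end dichotomy is in place.
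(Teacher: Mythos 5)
This note states the theorem without proof, citing \cite{kmn22}, so your proposal must be measured against the argument given there. Your Step 1 is correct and is indeed the starting point of \cite{kmn22}: by Dirichlet form theory, Markovian uniqueness is equivalent to $H^1_0(\cG)=H^1(\cG)$. The proposal goes wrong in Step 2, and the error is not cosmetic. The orthogonal complement you compute is $H^1(\cG)\ominus H^1_0(\cG)=H^1(\cG)\cap\ker(\bH+1)$, \emph{not} $\ker(\bH+1)$: on an infinite metric graph the domain of the maximal Kirchhoff Laplacian is in general not contained in $H^1(\cG)$ (this is exactly the Gaffney--Laplacian subtlety studied in \cite{kn21}), so an $L^2$ solution of $-h''+h=0$ with Kirchhoff conditions need not have finite Dirichlet energy. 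Since $\ker(\bH+1)$ is the deficiency space of the minimal operator, your claimed equivalence ``Markovian uniqueness $\Leftrightarrow\ \ker(\bH+1)=\{0\}$'' would make Markovian uniqueness the same as self-adjointness of $\bH$ --- which is false, as this very paper stresses (self-adjointness is strictly stronger, and no geometric characterization of it is known). Consequently the forward half of your Step 3, as literally stated, asserts a false claim: there exist graphs all of whose ends have infinite volume whose Kirchhoff Laplacian is \emph{not} self-adjoint, so no argument can show $\ker(\bH+1)=\{0\}$ for them. Any repaired flux argument must use $h'\in L^2$ in an essential way; your justification (``$L^2$-summability of $h$ over the tail forces the flux through deep cuts to vanish'') never invokes it, so it cannot work as stated. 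In addition, the decomposition of the flux into \emph{nonnegative} per-end contributions is unjustified, and for ends of finite diameter but infinite volume (rapidly branching trees with shrinking edge lengths) any parametrization of cuts by depth collapses to a finite interval --- this is precisely the hard case that you flag in your ``main obstacle'' but do not resolve.

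The converse direction also has a gap: after removing a neighborhood of the finite-volume end, what remains is still an \emph{infinite} graph, so there is no ``finite linear system'' through which to match edgewise $\cosh/\sinh$ branches; prescribing decaying solutions at infinitely many other ends and enforcing Kirchhoff conditions at infinitely many vertices is not a finite-dimensional matching problem. The efficient route --- essentially that of \cite{kmn22} --- avoids solving $-h''+h=0$ altogether: if an end $\gamma_0$ has finite volume, take a cutoff $g$ equal to $1$ on a deep end-neighborhood $U_{n+1}$ and supported in $U_n$; finiteness of $\vol(U_n)$ (together with $\vol(U_n)\to 0$) makes $g\in H^1(\cG)$, while a trace argument shows $g\notin H^1_0(\cG)$: every $H^1$-function has oscillation at most $\|f'\|_{L^2(U_n)}\vol(U_n)^{1/2}$ on $U_n$, hence extends continuously to the finite-volume end, the resulting trace functional is $H^1$-bounded, and it vanishes on compactly supported functions and hence on $H^1_0(\cG)$. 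The proof in \cite{kmn22} is built on this analysis of traces at ends together with explicit cutoff constructions for the hard inclusion (all ends of infinite volume $\Rightarrow H^1_0=H^1$), not on deficiency elements. If you wish to keep your Hilbert-space framing, a nonzero element of $H^1\ominus H^1_0$ can then be obtained abstractly as the projection of $g$; but the substance of the theorem is the trace/cutoff analysis, which is absent from the proposal.
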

We obtain the following corollary in the special case of only one graph end, for instance for graphs arising from (well-behaved) tilings of the plane $\R^2$ or Cayley graphs of amenable groups which are not virtually infinite cyclic.

\begin{corollary}[\cite{kmn22}]
Let $\cG = (\cV, \cE, \ell)$ be an infinite metric graph having only one end. Then Markovian uniqueness holds if and only if $\cG$ has infinite total volume, that is, $\operatorname{vol}(\cG) = \sum_{e \in \cE} \ell(e) = \infty$.
\end{corollary}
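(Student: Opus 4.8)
The plan is to read off the corollary from the preceding theorem and then reduce the one remaining point to an elementary volume count. Since $\cG$ has exactly one end $\gamma$, the condition ``all ends have infinite volume'' in the theorem simply becomes ``$\gamma$ has infinite volume''. Hence the theorem already gives that Markovian uniqueness holds if and only if $\gamma$ has infinite volume, and it only remains to prove the purely geometric equivalence
\[
\gamma \text{ has infinite volume} \quad \Longleftrightarrow \quad \vol(\cG) = \infty.
\]

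To establish this equivalence I would fix an exhausting sequence $\cG_0 \subset \cG_1 \subset \cG_2 \subset \dots$ of finite subgraphs and let $(U_n)_{n \in \N}$ be the decreasing sequence of non-compact connected components of $\cG \setminus \cG_n$ representing $\gamma$. The crucial structural input is local finiteness (Hypothesis~\ref{hyp:locfin}): because each $\cG_n$ is finite, only finitely many edges leave $\cG_n$, so $\cG \setminus \cG_n$ has only finitely many connected components. Moreover, the number of non-compact components of $\cG \setminus \cG_n$ is non-decreasing in $n$ and has limit equal to the number of ends, which here is one; since it is also at least one (as $\cG$ is non-compact), it equals one for every $n$. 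Thus, for each $n$, the set $\cG \setminus \cG_n$ decomposes into the single non-compact component $U_n$ together with finitely many compact components $C$, each a finite subgraph of finite volume.

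I would then split the total volume as
\[
\vol(\cG) = \vol(\cG_n) + \vol(U_n) + \sum_{C} \vol(C),
\]
where the sum ranges over the finitely many compact components of $\cG \setminus \cG_n$. Here $\vol(\cG_n) < \infty$, each $\vol(C) < \infty$, and the sum is finite, so $\vol(\cG) - \vol(U_n)$ is finite for every $n$. Therefore $\vol(\cG) = \infty$ if and only if $\vol(U_n) = \infty$, and since the $U_n$ are nested this happens for all $n$ simultaneously, which is precisely the condition that $\gamma$ has infinite volume. Combining this with the theorem yields the corollary.

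The step I expect to require the most care is the passage from the one-end hypothesis to the assertion that $\cG \setminus \cG_n$ has, for \emph{every} $n$, a single non-compact component with finite-volume complement. This relies on the bijection between graph ends and topological ends together with local finiteness, and one must argue the monotonicity of the count of non-compact components to conclude that it equals one uniformly in $n$ rather than only along a subsequence. Once this structural fact is secured, the volume bookkeeping in the displayed identity is routine.
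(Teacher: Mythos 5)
Your proof is correct and follows exactly the route the paper intends: the corollary is stated as an immediate specialization of the preceding theorem (cited from \cite{kmn22}, with the deduction left implicit), namely that for a one-ended graph the condition ``all ends have infinite volume'' reduces to $\vol(\cG)=\infty$. Your volume bookkeeping --- using local finiteness to get finitely many components of $\cG\setminus\cG_n$, exactly one of which is non-compact, with the compact ones having finite volume --- supplies precisely the details the paper omits, and each step holds as you argue it.
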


In the simple situation when $\cG$ has only finitely many ends of finite volume, one can also use boundary conditions on finite volume ends in order to describe a certain class of self-adjoint restrictions \cite{kmn22,kn21}. Moreover, although Laplacians on metric graphs, Riemannian manifolds and discrete graphs admit many similarities, we are not aware of a geometric characterization of Markovian uniqueness in the other two cases (see \cite[Section 7.2]{kn} for further discussion and references).

%%%%%%%%%%%%%%%%%%%%%%%%%%%%%%%%%%%%%%%%%%
\section{Connections to discrete graph Laplacians} \label{sec:DiscretevsContinuous}
%%%%%%%%%%%%%%%%%%%%%%%%%%%%%%%%%%%%%%%%%%

In this section, we discuss another interesting feature of quantum graphs: their close relations to \emph{discrete graph Laplacians}. In what follows, let $\cG  = (\cV, \cE, \ell)$ be a metric graph (finite or infinite).

\subsection{Equilateral metric graphs} The connection to discrete Laplacians is most evident for \emph{equilateral metric graphs}, meaning that all edges $e \in \cE$ have unit length $\ell(e) = 1$. In this simple case, the Kirchhoff Laplacian $\bH$ is closely connected to the so-called \emph{normalized Laplacian} on $\cG_d = (\cV, \cE)$ (note also that $\bH$ is self-adjoint, see Section~\ref{ss:SelfadjointnessCriteria}). More precisely, the normalized Laplacian $\rh \colon \ell^2(\cV; \deg) \to \ell^2(\cV; \deg)$ is defined in the Hilbert space 
\[
\ell^2(\cV; \deg) = \Big\{ \f \colon \cV \to \C |\,\, \| \f \|^2_{\ell^2(\cV;\deg)}:= \sum_{v\in\cV} |\f(v)|^2 \deg(v) <\infty \Big\}
\]
with the inner product $( \f, \g ) = \sum_{v \in \cV} \f(v) \bar{\g}(v) \deg(v)$. It maps $\f \in \ell^2(\cV; \deg)$ to $\rh \f \in \ell^2(\cV; \deg)$ given by
\begin{align}\label{eq:LaplNorm}
(\rh \f)(v) = \frac{1}{\deg(v)}\sum_{u\sim v} \big (  \f(v) - \f(u)  \big ), \qquad v \in \cV.
\end{align}
It is easy to check that $\rh$ is a bounded, self-adjoint operator in $ \ell^2(\cV; \deg)$ with spectrum $\sigma(\rh) \subseteq [0,2]$. The normalized Laplacian appears in many areas of mathematics, physics and engineering. It serves as the generator of the simple random walk on $\cG_d$ (i.e., a walker at a vertex $v \in \cV$ jumps to an adjacent vertex $w \sim v$ chosen uniformly at random). In the case of Cayley graphs, the study of connections to algebraic properties of the underlying group was initiated in the work of Kesten \cite{kesten2} (actually, in his PhD thesis).  Moreover, the relationship between the spectral properties of $\rh$ and various graph parameters is one of the core topics within the field of {\em Spectral Graph Theory} (see \cite{chung, cdv} for further details).

In the equilateral case, the Kirchhoff Laplacian and normalized Laplacian are connected by the following simple formula (recall also that $\sigma(\bH) \cup \sigma(\rh) \subseteq [0, \infty)$).

\begin{theorem} \label{thm:Equilateral}
Let $\cG = (\cV, \cE, \ell)$ be an equilateral metric graph. If $ \lambda \ge 0$ and $\lambda \notin \{ (\pi n)^2; n\in \N \}$, then 
	\[
		\lambda \in \sigma (\bH) \quad \Longleftrightarrow \quad  1-\cos  ( \sqrt{\lambda} )  \in \sigma ( \rh)
	\]
for the Kirchhoff Laplacian $\bH$ on the metric graph $\cG = (\cV, \cE, \ell)$ and the normalized Laplacian $ {\rh}$ on the underlying combinatorial graph $\cG_d = (\cV, \cE)$.
\end{theorem}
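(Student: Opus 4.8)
\subsection*{Proof proposal}

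The plan is to reduce the eigenvalue equation $\bH f = \lambda f$ to an edgewise ordinary differential equation, solve it explicitly, and read off the Kirchhoff conditions as the eigenvalue equation for $\rh$. Fix $\lambda \ge 0$ with $\lambda \notin \{(\pi n)^2;\, n \in \N\}$ and write $k = \sqrt{\lambda}$, so that $\sin k \neq 0$ --- this nonvanishing is exactly what the excluded set guarantees, and it is the pivot of the whole argument. On an edge $e \in \cE$ joining $u$ to $v$, identified with $[0,1]$ via $x=0 \leftrightarrow u$ and $x=1 \leftrightarrow v$, any solution of $-f_e'' = \lambda f_e$ is a combination of $\cos(kx)$ and $\sin(kx)$, and prescribing the endpoint values $f(u), f(v)$ determines it uniquely as
\[
f_e(x) = f(u)\,\frac{\sin\!\big(k(1-x)\big)}{\sin k} + f(v)\,\frac{\sin(kx)}{\sin k}, \qquad x \in [0,1].
\]
Thus every edgewise $\lambda$-solution is parametrized by its vector of vertex values $\f = (f(v))_{v \in \cV}$, and continuity across vertices holds automatically by construction. (The limiting case $\lambda = 0$, $k=0$, is handled identically with the affine interpolation $f_e(x) = f(u)(1-x) + f(v)x$.)

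Next I would impose the Kirchhoff derivative condition. Differentiating the formula above, the derivative at $u$ taken \emph{into} the edge $e$ is
\[
f_e'(u) = \frac{k}{\sin k}\,\big(f(v) - \cos(k)\, f(u)\big),
\]
so summing over $e \in \cE_u$ and using $k/\sin k \neq 0$ turns $\sum_{e \in \cE_u} f_e'(u) = 0$ into $\sum_{v \sim u} f(v) = \deg(u)\cos(k)\, f(u)$, i.e.
\[
(\rh \f)(u) = \f(u) - \frac{1}{\deg(u)}\sum_{v \sim u}\f(v) = \big(1 - \cos k\big)\,\f(u), \qquad u \in \cV.
\]
Hence the Kirchhoff conditions at all vertices are equivalent to the single spectral equation $\rh \f = (1-\cos\sqrt{\lambda})\,\f$, which already identifies the correspondence at the level of the equations.

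To turn this into a spectral statement I would first record a norm equivalence. Since all edges are copies of $[0,1]$, the linear map $(f(u),f(v)) \mapsto f_e$ and its inverse on the two-dimensional solution space have operator norms depending only on $k$; writing these constants as $c(k), C(k)$ one gets $c(k)(|f(u)|^2 + |f(v)|^2) \le \|f_e\|_{L^2(0,1)}^2 \le C(k)(|f(u)|^2 + |f(v)|^2)$ uniformly in $e$. Summing over $\cE$ and using $\sum_{e} (|f(u)|^2+|f(v)|^2) = \sum_{w \in \cV} \deg(w)\,|f(w)|^2$ shows $\|f\|_{L^2(\cG)}$ and $\|\f\|_{\ell^2(\cV;\deg)}$ are comparable. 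Consequently $f \in L^2(\cG)$ iff $\f \in \ell^2(\cV;\deg)$, and the map $\f \mapsto f$ is a bijection of the relevant eigenspaces; this settles the equivalence for eigenvalues, and in particular gives the theorem in the finite (compact) case, where $\sigma(\bH)$ and $\sigma(\rh)$ are purely discrete.

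The main obstacle is the infinite case, where $\sigma(\bH)$ and $\sigma(\rh)$ may contain continuous spectrum and a point-spectrum bijection is no longer enough. Here I would upgrade to the full spectra via the Weyl (singular sequence) criterion: given a normalized approximate eigenvector $\f_n$ for $\rh$ at $1-\cos\sqrt\lambda$, the interpolants $f_n$ solve $-f_n'' = \lambda f_n$ edgewise and continuously, while the computation above shows their Kirchhoff defect at each vertex is controlled by $(\rh - (1-\cos k))\f_n$; after a small correction placing $f_n$ into $\dom(\bH)$ one obtains an approximate eigenvector for $\bH$ at $\lambda$, and conversely. A cleaner and more robust alternative is a Krein-type resolvent formula from the theory of boundary triples, expressing $(\bH - \lambda)^{-1}$ through $(\rh - (1-\cos\sqrt\lambda))^{-1}$ and thereby making $\bH - \lambda$ and $\rh - (1-\cos\sqrt\lambda)$ simultaneously (non)invertible away from the Dirichlet spectrum. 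The delicate point in either route is precisely the behavior near the excluded values $\lambda = (\pi n)^2$: there $\sin k \to 0$, the constants $c(k), C(k)$ degenerate, and the Dirichlet eigenfunctions supported away from the vertices decouple from $\rh$ entirely --- which is exactly why these points must be removed from the statement.
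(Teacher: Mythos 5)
Your treatment of the finite (compact) case is, up to cosmetic differences, exactly the paper's proof: you solve $-f_e''=\lambda f_e$ edgewise, express the solution through its vertex values, and observe that the Kirchhoff conditions at a vertex become the equation $\rh\f=(1-\cos\sqrt\lambda)\f$ for the vertex restriction $\f=f|_\cV$. Your parametrization by the two endpoint values (rather than the paper's value-and-derivative data $a_e=f(v)$, $b_e=f_e'(v)/\sqrt\lambda$) is in fact slightly cleaner: it makes the converse direction, which the paper dispatches in one sentence, completely explicit via the interpolation formula, it isolates precisely why the excluded set $\{(\pi n)^2\}$ matters (unique solvability of the edgewise Dirichlet problem), and your uniform norm equivalence $\|f\|_{L^2(\cG)}\asymp\|\f\|_{\ell^2(\cV;\deg)}$ for edgewise $\lambda$-solutions gives the bijection of eigenspaces in a form that does not depend on finiteness of $\cG$. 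You also handle $\lambda=0$ explicitly, which the paper's proof silently skips.

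Where you go beyond the paper is the infinite case, and there your argument has a genuine gap. The paper itself only proves the finite case and refers to the literature (Pankrashkin, and Berkolaiko--Kuchment, Section 3.6) for the general statement, so you are attempting more than the paper does; but your Weyl-sequence sketch is asymmetric in a way you gloss over with ``and conversely.'' The direction from $\rh$ to $\bH$ can indeed be completed: the interpolants $f_n$ fail only the derivative condition, the defect at each vertex is controlled by $(\rh-(1-\cos\sqrt\lambda))\f_n$ in the appropriate weighted norm, and a correction by bump functions of \emph{fixed} support width (crucially not shrinking, so that second derivatives stay controlled) produces genuine elements of $\dom(\bH)$. The converse direction, however, does not follow by ``performing the steps in reverse'': an approximate eigenvector $u_n\in\dom(\bH)$ with $\|(\bH-\lambda)u_n\|\to 0$ is \emph{not} edgewise a $\lambda$-solution, so your interpolation dictionary and norm equivalence simply do not apply to it, and restricting $u_n$ to $\cV$ gives no control on $(\rh-(1-\cos\sqrt\lambda))(u_n|_\cV)$. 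Repairing this requires either an edgewise projection onto $\lambda$-solutions with error estimates, or the Krein-type resolvent formula / boundary-triple machinery you mention as an alternative --- which is precisely the route taken in the references the paper cites, and which you would need to actually develop rather than invoke by name.
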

The equivalence in Theorem~\ref{thm:Equilateral} remains true for specific parts of the spectrum, such as the discrete and essential spectrum, or the absolutely continuous, pure point and singular continuous spectrum. These connections were studied and extended by several authors. We refer to \cite{pan12} and \cite[Section 3.6 and Section 3.8]{bk13} for an overview of the results, history and further references.

If the metric graph $\cG$ is finite, Theorem~\ref{thm:Equilateral} has an elementary proof, which we reproduce below (see also, e.g., \cite[Section 3.6]{bk13}). Note that in this case, the discrete Laplacian $\rh$ is described by a finite, non-negative matrix $\rh \in \R^{\cV  \times \cV}$. In particular, the spectra of both $\bH$ and $\rh$ are discrete and consist only of non-negative eigenvalues. 

\begin{proof}[Proof of Theorem~\ref{thm:Equilateral} for finite graphs]
Consider the eigenvalue equation for the Kirchhoff Laplacian $\bH$,
\begin{equation} \label{eq:ContinuousEigenvalueProblem}
\bH f = \lambda f, \qquad f \in\dom(\bH).
\end{equation}
For an eigenvalue $\lambda\neq 0$, any eigenfunction $f$ must satisfy the equation $-f_e'' = \lambda f_e$ on every edge $e =  [0, \ell(e)]$ of $\cG$. Hence, on an edge $e = [0, \ell(e)]$ with left and right endpoints $v, w \in \cV$,
\[
f_e(x_e) = a_e \cos(\sqrt{\lambda} x_e) + b_e \sin( \sqrt{\lambda} x_e ), \qquad x_e \in e = [0, \ell(e)],
\]
for some $a_e, b_e \in \C$. Clearly, we have $a_e = f (v)$ and $b_e =  f_e'(v) / \sqrt{\lambda}$. Since $\cG$ is equilateral, we obtain the equation
\[
f(w) - \cos(\sqrt{\lambda}) f(v) = f_e'(v) \frac{\sin(\sqrt \lambda)}{\sqrt \lambda}
\]
for all neighbors $w \in \cV$ of some fixed vertex $v \in \cV$. Summing over all neighbors $w$ of $v$, the Kirchhoff conditions imply that
\begin{equation} \label{eq:DiscreteEigenvalueProblem}
\rh \mathbf{f}  = (1 - \cos(\sqrt \lambda)) \mathbf{f}
\end{equation}
for the restriction $\mathbf{f} := f |_{\cV} \colon \cV \to \C$ of $f$ to vertices. In particular, if $f$ does not vanish in all vertices, we arrive at an eigenvalue of the normalized Laplacian $\rh$. However, taking into account the properties of the sinus function, this can only happen if $\lambda = k^2 \pi^2$ for some $k \in \N$. Altogether, we have proven the implication "$\Rightarrow$" in the above equivalence. The converse direction "$\Leftarrow$" follows by performing the above steps in the reverse direction. More precisely, one shows that every function $\mathbf{f} \colon \cV \to \C$ satisfying \eqref{eq:DiscreteEigenvalueProblem} is equal to the restriction $\mathbf{f} = f|_{\cV}$ of a function $f \in \dom(\bH)$ satisfying \eqref{eq:ContinuousEigenvalueProblem}.
\end{proof}

\subsection{The general case} If $\cG$ is not equilateral, then the explicit formula in Theorem~\ref{thm:Equilateral} breaks down. However, it turns out that in terms of \emph{qualitative spectral properties}, the Kirchhoff Laplacian $\bH$ is still connected to a suitable \emph{weighted discrete Laplacian}. Consider the vertex weight $m \colon \cV \to (0, \infty)$ and edge weight $b \colon \cE \to (0, \infty)$ given by
\begin{align} \label{eq:MetricWeights}
&m(v) = \sum_{e \in \cE_v} \ell(e), \quad v \in \cV, &b(e) = \frac{1}{\ell(e)}, \quad e \in \cE.
\end{align}
We define a weighted discrete Laplacian $\rh \colon \dom(\rh) \subseteq \ell^2(\cV;m) \to \ell^2(\cV;m)$ by the difference expression
\begin{equation} \label{eq:DiscreteLaplacian}
(\rh \f) (v)= \frac{1}{m(v)} \sum_{u \sim v}b(e_{u,v}) \big ( \f(v) - \f(u) \big ), \qquad v \in \cV,
\end{equation}
on the domain $\dom(\rh) =\{ \f \in \ell^2(\cV; m) | \, \rh \f \in \ell^2(\cV; m)\}$  in the Hilbert space $\ell^2(\cV; m) = \{ \f \colon \cV \to \C |\,\, \| \f \|^2_{\ell^2(\cV;m)}:= \sum_{v\in\cV} |\f(v)|^2 m(v) <\infty \}$. In the equilateral case, $m \equiv \deg$, $b \equiv 1$, $\dom(\rh) = \ell^2(\cV; \deg)$ and $\rh$ is the normalized Laplacian.

An immediate way of connecting the Kirchhoff Laplacian $\bH$ and the discrete Laplacian $\rh$ arises by noting a connection between harmonic functions. Namely, every harmonic function $f\colon \cG \to \R$  (i.e., $f'' \equiv 0$ on every edge and $f$ satisfies Kirchhoff conditions \eqref{eq:kirchhoff}) must be edgewise affine and satisfy
\[
0 = \sum\limits_{e \in \cE_v}  f_e'(v) = \sum_{ u \sim v} \frac{1}{\ell(e_{u,v})} \big (f(u)- f(v) \big) = (- m \cdot \rh \f) (v)
\]
at each vertex $v\in\cV$. In particular, the restriction to vertices $\mathbf{f} := f |_{\cV} \colon \cV \to \R$ is harmonic, i.e., we have $\rh \f = 0$. Conversely, every harmonic function $\f \colon \cV \to \R$ on $\cV$ gives rise to a harmonic function $f$ on $\cG$ by linear interpolation on edges. This immediately connects, for instance, Liouville-type properties on metric graphs and weighted discrete graphs (and the corresponding Poisson and Martin boundaries). Moreover, the weight $m(v)$ of a vertex $v \in \cV$ equals the Lebesgue mass of the "star" around it (i.e., the union of all incident edges) and this connects the Hilbert spaces $L^2(\cG)$ and $\ell^2(\cV; m)$.

Under the additional assumption that $ \sup_{e \in \cE} \ell(e) <\infty$, the connection between the Laplacians can informally be stated as follows.

\begin{theorem}[\cite{ekmn}]  \label{thm:SpectralEquivalence}  The Kirchhoff Laplacian $\bH$ and the weighted discrete Laplacian $\rh$ have the same "basic spectral properties".
\end{theorem}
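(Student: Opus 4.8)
The first task is to fix the meaning of \emph{``basic spectral properties''}: I would read Theorem~\ref{thm:SpectralEquivalence} as a dictionary asserting that $\bH$ and $\rh$ agree on (a) self-adjointness, i.e.\ the deficiency indices match; (b) non-negativity and lower semiboundedness, with a comparison of the spectral bottom; (c) discreteness of the spectrum (compactness of the resolvent); and (d) the location of $\sigma_{\mathrm{ess}}$, in particular whether $0 \in \sigma_{\mathrm{ess}}$. The goal is to derive every entry of this dictionary from a single algebraic identity linking the two operators. Here the hypothesis $\ell^\ast := \sup_{e \in \cE} \ell(e) < \infty$ plays a structural role: it forces the \emph{Dirichlet spectrum} $\Sigma_D := \{ (\pi n / \ell(e))^2 : e \in \cE,\ n \in \N\}$ to lie in $[(\pi/\ell^\ast)^2, \infty)$, since the smallest Dirichlet eigenvalue on $\cI_e \cong [0,\ell(e)]$ is $(\pi/\ell(e))^2 \ge (\pi/\ell^\ast)^2 > 0$. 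Thus the interval $[0,(\pi/\ell^\ast)^2)$ is free of Dirichlet spectrum, so near the bottom $\bH$ carries no edge-localized eigenfunctions and the comparison with $\rh$ is unobstructed there.

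Next I would \emph{decouple} the edges. Let $\bH_D = \bigoplus_{e \in \cE}(-d^2/dx^2)$ be the orthogonal sum of Dirichlet Laplacians on the intervals $\cI_e$, a self-adjoint operator with spectrum exactly $\Sigma_D$. The Kirchhoff Laplacian $\bH$ is one self-adjoint realization lying between the minimal and maximal operators built from $\bH_D$, and the object encoding all such realizations is a boundary triplet with boundary space modelled on $\ell^2(\cV)$: the map $\Gamma_0 f$ records the common vertex values forced by continuity and $\Gamma_1 f$ records the vertex sums of outgoing derivatives, so that the Kirchhoff conditions \eqref{eq:kirchhoff} read precisely $\Gamma_1 f = 0$. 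For infinite graphs of unbounded geometry this triplet may degenerate, and it is exactly this degeneration that encodes the possible failure of self-adjointness discussed in Section~\ref{sec:Selfadjointness}.

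The crux is to compute the associated Weyl function $M(z)$. Using the fundamental solutions $\cos(\sqrt z\, x)$ and $\sin(\sqrt z\, x)$ on each edge---the same computation as in the proof of Theorem~\ref{thm:Equilateral}, but now performed edge by edge with the length $\ell(e)$---one obtains, for $z \notin \Sigma_D$, an explicit $M(z)$ whose diagonal entries involve $\sqrt z\,\cos(\sqrt z\,\ell(e))/\sin(\sqrt z\,\ell(e))$ and whose off-diagonal entries carry the factors $\sqrt z/\sin(\sqrt z\,\ell(e))$. The point is that $M(z)$ is congruent to $\rh$ up to a $z$-dependent diagonal shift $\eta(z)$ generalizing $z \mapsto 1-\cos\sqrt z$: for $z$ in the Dirichlet-free window $[0,(\pi/\ell^\ast)^2)$ the arguments $\sqrt z\,\ell(e)$ stay uniformly below $\pi$, so each edge factor $\sqrt z/\sin(\sqrt z\,\ell(e))$ is uniformly comparable to the weight $b(e)=1/\ell(e)$, and $M(z)$ becomes a bounded, boundedly invertible perturbation of $\rh$ there. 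A Krein-type resolvent formula then ties $(\bH - z)^{-1}$ to $(M(z))^{-1}$ off $\Sigma_D$, yielding $z \in \sigma(\bH)\setminus\Sigma_D \iff \eta(z) \in \sigma(\rh)$, with the correspondence respecting spectral type.

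Finally I would transfer each property across this correspondence. Non-negativity and the sign of the spectral bottom follow from the quadratic-form comparison on $[0,(\pi/\ell^\ast)^2)$ together with the sign of $\eta$; self-adjointness of $\bH$ is equivalent to the matching of deficiency indices read off from $M(z)$, which by the factorization is equivalent to (essential) self-adjointness of $\rh$; and discreteness and the essential spectrum transfer directly from $\sigma(\rh)$ through $\eta$. The leftover points of $\Sigma_D$ contribute only eigenvalues supported on edge-interior functions, whose multiplicities are dictated by the cycle structure of $\cG_d$ and must be accounted for separately. I expect the main obstacle to be exactly this bookkeeping at $\Sigma_D$, combined with the boundary (limiting-absorption) analysis of $M(z)$ needed to upgrade the resolvent identity from a statement about $\sigma$ and $\sigma_{\mathrm{ess}}$ to the finer absolutely continuous, pure point and singular continuous parts; checking that $\ell^\ast < \infty$ keeps the edge factors comparable to $b(e)$ uniformly is precisely what makes all of these transfers quantitative.
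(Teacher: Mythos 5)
The first thing to say is that the paper contains no proof of Theorem~\ref{thm:SpectralEquivalence}: the statement is deliberately informal (``basic spectral properties'' appears in quotation marks) and is attributed wholesale to \cite{ekmn}, with the surrounding text only listing sample consequences (simultaneous self-adjointness, simultaneous positivity of spectral gaps, resolvent compactness, heat semigroup properties). So your sketch can only be measured against the strategy of that reference, and there your outline is essentially the correct one: \cite{ekmn} does decouple the edges, build a boundary triplet whose boundary space sits over the vertices, and transfer spectral properties between $\bH$ and a boundary operator via the abstract Weyl-function/Krein-resolvent machinery. Your reading of what ``basic spectral properties'' should mean is also a fair formalization of the list proven there.

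That said, your sketch elides exactly the step that constitutes the technical core of \cite{ekmn}, namely the one you dismiss with ``for infinite graphs of unbounded geometry this triplet may degenerate.'' The regime in which the theorem has content is $\inf_{e \in \cE}\ell(e) = 0$: if edge lengths are bounded below (and above), then $\bH$ is self-adjoint by condition (ii) of Section~\ref{ss:SelfadjointnessCriteria} and $\rh$ is a bounded self-adjoint operator, so most of the dictionary is classical. But precisely when $\inf_e \ell(e) = 0$, the naive direct sum of the per-edge boundary triplets (the one whose Weyl function has the $\sqrt{z}\cos(\sqrt{z}\,\ell(e))/\sin(\sqrt{z}\,\ell(e))$ entries you write down) fails to be a boundary triplet: Green's identity survives, but the surjectivity requirement on $(\Gamma_0,\Gamma_1)$ breaks down, and none of the abstract transfer theorems apply. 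One must first regularize the edge triplets (a renormalization going back to Kostenko and Malamud, carried out in \cite{ekmn}), and it is only after this regularization that the object similar to the weighted Laplacian $\rh$ emerges --- and it is the \emph{boundary operator} encoding the Kirchhoff conditions \eqref{eq:kirchhoff}, not the Weyl function itself; the weights \eqref{eq:MetricWeights} are an output of the regularization, not an input one can guess from the equilateral computation. Without this step, your claim that $M(z)$ is ``congruent to $\rh$ up to a diagonal shift $\eta(z)$'' has no precise meaning in the non-equilateral, $\inf\ell = 0$ setting. A second, smaller overreach: the absolutely continuous/pure point/singular continuous refinement you promise at the end is a feature of the equilateral Theorem~\ref{thm:Equilateral}, not of Theorem~\ref{thm:SpectralEquivalence}, which is only qualitative; the boundary-triplet dictionary does not yield spectral types at this level of generality without additional limiting-absorption analysis that neither you nor \cite{ekmn} supplies, so it should be dropped from the claimed dictionary rather than listed as bookkeeping.
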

For instance, $\bH$ and $\rh$ are self-adjoint simultaneously, spectral gaps are strictly positive simultaneously, and one may connect compactness of the resolvents and properties of the corresponding heat semigroups (see \cite{ekmn} for a detailed list of connections). However, note that Theorem~\ref{thm:Equilateral} provides an explicit formula relating numerical values, whereas the connections in Theorem~\ref{thm:SpectralEquivalence} are only of qualitative nature. In particular, Theorem~\ref{thm:SpectralEquivalence} is only non-trivial for infinite graphs, since qualitative spectral properties are well-known already for Kirchhoff Laplacians and discrete Laplacians on finite graphs. Theorem~\ref{thm:SpectralEquivalence} was proven by Exner, Kostenko, Malamud and Neidhardt, allowing them to further develop spectral theory of infinite quantum graphs by using results on discrete Laplacians \cite{ekmn}. 

\subsection{Spectral theory of graphs:  discrete vs. continuous} A general \emph{discrete graph Laplacian} is of the form \eqref{eq:DiscreteLaplacian} for arbitrary edge and vertex weights $m$ and $b$. Note that weights stemming from metric graphs satisfy
\[
m(v) = \sum_{e \in\cE_v} b(e), \qquad v \in \cV.
\]
From this perspective, Kirchhoff Laplacians on metric graphs correspond to a rather special class of discrete Laplacians. However, one can easily introduce also a weighted version of Kirchhoff Laplacians \cite{kn}. This relates to the approach of using Brownian motion on weighted metric graphs to study random walks on discrete graphs, which has been employed several times in the stochastics literature (see e.g. \cite{baba03, fo14, hush14, var85b} and the references therein).  Namely, in the framework of Dirichlet forms, these two classes of stochastic processes correspond precisely to weighted Kirchhoff Laplacians and discrete Laplacians.

It turns out that Theorem ~\ref{thm:SpectralEquivalence} carries over to the setting of weighted Kirchhoff Laplacians and, allowing in addition graphs with loops, this procedure allows to recover all discrete Laplacians \cite{akmmnn22, kn22a, kn}. Hence naively speaking, in the setting of infinite locally finite graphs and with respect to basic spectral properties,
\begin{center}
"\emph{spectral theory of weighted Kirchhoff Laplacians and discrete Laplacians \\ is equivalent}".%"\emph{spectral theory of (weighted) Kirchhoff Laplacians and (weighted) discrete Laplacians is equivalent}".
\end{center}
The connections between these two types of operators can be viewed from several different perspectives (such as spectral, parabolic and global metric properties), and exactly their rich interplay leads to important further insight on both sides. For further information on this viewpoint we refer to \cite{akmmnn22, kn22a, kn}.

\ack{I would like to thank Omid Amini, Christof Ender and Aleksey Kostenko for helpful suggestions during the preparation of this article.}


\begin{thebibliography}{XXX}


\bibitem{amini}
O.\ Amini, {\em Geometry of graphs and applications in arithmetic and algebraic geometry}, preprint, \url{http://omid.amini.perso.math.cnrs.fr/Publications/Survey.pdf}.




\bibitem{ak}
O.\ Amini und J.\ Kool, {\em A spectral lower bound for the divisorial gonality of metric graphs}, Int. Math. Res. Notices (IMRN) {\bf 8}, 2423--2450 (2016).

\bibitem{an20}
O.\ Amini and N.\ Nicolussi, {\em Moduli of hybrid curves and variations of canonical measures}, submitted, \arxiv{2007.07130} (2021).

\bibitem{an22}
O.\ Amini and N.\ Nicolussi, {\em Moduli of hybrid curves II: Tropical and hybrid Laplacians}, preprint, \arxiv{2203.12785} (2022).


\bibitem{bj}
M.\ Baker and D.\ Jensen, {\em Degeneration of linear series from the tropical point of view and applications}, in: M. Baker and S. Payne (eds.), {Nonarchimedean and Tropical Geometry}, Simons Symposia, Springer-Verlag, 2016.


\bibitem{bano07}
M.\ Baker and S.\ Norine, \emph{Riemann--Roch and Abel--Jacobi theory on a finite graph}, Adv.\ Math.\ {\bf 215}, no.~2, 766--788 (2007).

\bibitem{bano09}
M.\ Baker and S.\ Norine, {\em Harmonic morphisms and hyperelliptic graphs}, Int.\ Math.\ Res.\ Not.\ IMRN {\bf 15},  2914--2955 (2009).


\bibitem{baru10}
M.\ Baker and R.\ Rumely, \emph{Potential Theory and Dynamics on the Berkovich Projective Line}, Math.\ Surv.\ Monographs {\bf 159}, Amer.\ Math.\ Soc., Providence, RI, 2010.


\bibitem{baba03}
M.\ T.\ Barlow and R.\ F.\ Bass, {\em  Stability of parabolic Harnack inequalities}, Trans.\ Amer.\ Math.\ Soc.\ {\bf 356}, no. 4, 1501--1533  (2004).


\bibitem{bcfk06}
G.\ Berkolaiko, R.\ Carlson, S.\ Fulling, and P.\ Kuchment, {\em Quantum Graphs and Their Applications},   Contemp.\ Math.\ {\bf 415}, Amer.\ Math.\ Soc., Providence, RI, 2006. 


\bibitem{bk13}
G.\ Berkolaiko and P.\ Kuchment, {\em Introduction to Quantum Graphs}, 
Amer.\ Math.\ Soc., Providence, RI, 2013. 

\bibitem{chung}
F.\ Chung, {\em Spectral Graph Theory}, CBMS Regional Conference Series in Mathematics {\bf 92}, Providence, RI, 1997.

\bibitem{colbois}
B.\ Colbois, {\em Petites valeurs propres du laplacien sur une surface de Riemann compacte et graphes}, C.\ R.\ Acad.\ Sci.\ Paris S\'{e}r.\ I Math.\ {\bf 301}, 927--930 (1985).

\bibitem{colboiscdV}
B.\ Colbois and Y.\ Colin de Verdi\`ere, {\em Sur la multiplicity de la premi\`ere valeur propre d'une surface de Riemann à courbure constante}, Comment.\ Math.\ Helv.\ {\bf 63}, 194--208 (1988).

\bibitem{cdv}
Y.\ Colin de Verdi\`ere, {\em Spectres de Graphes}, Soc.\ Math.\ de France, Paris, 1998.

\bibitem{ckk}
G.\ Cornelissen, F.\ Kato, and J.\ Kool, {\em A combinatorial Li--Yau inequality and rational points on curves}, Math. Annalen {\bf 361}, no. 1, 211--258 (2015).

\bibitem{dav89}
E.\ B.\ Davies, {\em Heat Kernels and Spectral Theory}, Cambridge Univ.\ Press, Cambridge (1989).

\bibitem{dejong}
R.\ de Jong, {\em Faltings delta-invariant and semistable degeneration}, J.\ Differential Geom.~ {\bf 111}, no.~2, 241--301 (2019).



\bibitem{die}
R.\ Diestel, \emph{Graph Theory}, 5th edn., Grad.\ Texts in Math. {\bf 173}, Springer-Verlag, Heidelberg, New York, 2017.

\bibitem{dv}
J.\ Draisma and A.\ Vargas, {\em On the Gonality of Metric Graphs}, Notices of the American Mathematical Society {\bf 68}, no.~5,  687--695 (2021).

\bibitem{dyson}
F.\ J.\ Dyson, {\em Birds and Frogs}, Notices of the American Mathematical Society {\bf 56}, no.~2, 212--223 (2009).

\bibitem{ebe}
A.\ Eberle, {\em Uniqueness and Non-Uniqueness of Semigroups Generated by Singular Diffusion Operators}, Lecture Notes in Math.\ {\bf 1718}, Springer, 1999.

\bibitem{ekkst08}
P.\ Exner, J.\ P.\ Keating, P.\ Kuchment, T.\ Sunada, and A.\ Teplyaev, \emph{Analysis on Graphs and Its Applications}, Proc.\ Symp.\ Pure Math.\ {\bf 77}, Amer.\ Math.\ Soc., Providence, RI, 2008.

\bibitem{ekmn}
P.\ Exner, A.\ Kostenko, M.\ Malamud, and H.\ Neidhardt, \emph{Spectral theory of infinite quantum graphs}, Ann.\ Henri Poincar\'e {\bf 19}, no.~11, 3457--3510 (2018).

\bibitem{exko}
P.\ Exner and  H.\ Kova\v{r}\'ik, {\em Quantum Waveguides}, Theor.\ and Math.\ Phys., Springer, Cham, 2015.

\bibitem{fo14}
M.\ Folz, \emph{Volume growth and stochastic completeness of graphs}, Trans.\ Amer.\ Math.\ Soc.\ {\bf 366}, 2089--2119 (2014).


\bibitem{f31}
H.\ Freudenthal, \emph{\"Uber die Enden topologischer R\"aume und Gruppen}, 
Math.\ Z.\ {\bf 33}, 692--713 (1931).

\bibitem{f44}
H.\ Freudenthal, \emph{\"Uber die Enden diskreter R\"aume und Gruppen}, 
Comment.\ Math.\ Helv.\ {\bf 17}, 1--38 (1944).

\bibitem{geog}
R.\ Geoghegan, \emph{Topological Methods in Group Theory}, Grad.\ Texts in Math.\ {\bf 243}, Springer, 2008.


\bibitem{gnutzmannsmilansky}
S.\ Gnutzmann and U.\ Smilansky, {\em Quantum graphs: applications to quantum chaos and universal spectral statistics}, Adv.\ Phys.\ {\bf 55}, 527--625 (2006).

\bibitem{gs}
B.\ Gutkin and U.\ Smilansky, {\em Can one hear the shape of a graph?}, J.\ Phys.\ A {\bf 34}, no.~31, 6061--6068 (2001).

\bibitem{hae}
S.\ Haeseler, \emph{Analysis of Dirichlet forms on graphs}, PhD thesis, Jena, 2014; \arxiv{1705.06322} (2017).

\bibitem{hop}
H.\ Hopf, \emph{Enden offener R\"aume und unendliche diskontinuierliche Gruppen}, 
Comment.\ Math.\ Helv.\ {\bf 16}, 81--100 (1943).


\bibitem{hush14}
X.\ Huang and Y.\ Shiozawa,  {\em Upper escape rate of Markov chains on weighted graphs}, Stoch.\ Proc.\ Their Appl.\ {\bf 124}, 317--347 (2014). 


\bibitem{iwaniec}
H.\ Iwaniec, {\em Spectral methods of automorphic forms}, Graduate Studies in Mathematics {\bf 53}, Amer.\ Math.\ Soc., Providence, RI, 2002.


\bibitem{kac}
M.\ Kac, {\em Can One Hear the Shape of a Drum?}, Amer. Math. Monthly {\bf 73}, no.~4, 1--23 (1966).

\bibitem{kesten2}
H.\ Kesten, {\em Full Banach mean values on countable groups}, Math.\ Scand.\ {\bf 7}, 146--156 (1959).



\bibitem{akmmnn22}
A.\ Kostenko, M.\ Malamud, and N.\  Nicolussi, {\em A Glazman-Povzner-Wienholtz Theorem on graphs}, Adv.\ Math.\ {\bf 395}, Art ID: 108158 (2022).


\bibitem{kmn22}
A.\ Kostenko, D.\ Mugnolo, and N.\ Nicolussi, {\em Self-adjoint and Markovian Extensions of Infinite Quantum Graphs}, J.\ London Math.\ Soc.\ {\bf 105}, no.~2, 1262--1313 (2022).


\bibitem{kn21}
A.\ Kostenko and N.\ Nicolussi, {\em A note on the Gaffney Laplacian on metric graphs}, J.\ Funct.\ Anal.\  {\bf 281}, no.10, Art ID: 109216 (2021).

\bibitem{kn22a}
A.\ Kostenko and N.\ Nicolussi, {\em Laplacians on infinite graphs: discrete vs. continuous}, Proceedings of the 8ECM, to appear, \arxiv{2110.03566} (2021).

\bibitem{kn}
A.\ Kostenko and N.\ Nicolussi, \emph{Laplacians on infinite graphs}, Mem. Eur. Math. Soc., to appear; preprint available at \url{www.mat.univie.ac.at/~kostenko/list/GraphLaplInf.pdf}.


\bibitem{kottossmilansky1}
T.\ Kottos and U.\ Smilansky, {\em Quantum chaos on graphs}, Phys.\ Rev.\ Lett.\ {\bf 79}, no.~24, 4794--4797 (1997).

\bibitem{kottossmilansky2}
T.\ Kottos and U.\ Smilansky, {\em Periodic orbit theory and spectral statistics for quantum graphs}, Ann.\ Physics {\bf 274}, no.~1, 76--124 (1999).

\bibitem{k08}
P.\ Kurasov, {\em Graph Laplacians and topology}, Ark.\ Mat.\ {\bf 46}, no.~1, 95--111 (2008).


\bibitem{kn05}
P.\ Kurasov and M.\ Nowaczyk, {\em Inverse spectral problem for quantum graphs}, J.\ Phys.\ A {\bf 38}, no.~22, 4901--4915 (2005).



\bibitem{ks}
P.\ Kurasov and P.\  Sarnak, {\em Stable polynomials and crystalline measures}, J.\ Math.\ Phys.\ {\bf 61}, Art ID: 083501 (2020).

\bibitem{meyer}
Y.\ F.\ Meyer, {\em Measures with locally finite support and spectrum}, Proc.\ Natl.\ Acad.\ Sci.\ U.S.A. {\bf 113}, no.~12, 3152--3158 (2016).

\bibitem{meyer21}
Y.\ F.\ Meyer, {\em Crystalline measures and mean-periodic functions}, Trans.\ R.\ Norw.\ Soc.\ Sci.\ Lett.\ 2021(2), 5--30 (2021).



\bibitem{mukzha}
G.\ Mikhalkin and I.\ Zharkov, {\em Tropical curves, their Jacobians and theta functions}, in: Curves and abelian varieties, 203--230, Contemp.\ Math.\ {\bf 465},  Amer. Math. Soc., Providence, RI, 2008.


\bibitem{ou20}
A.\ Olevskii and A.\ Ulanovskii, {\em Fourier Quasicrystals with Unit Masses}, Comptes Rendus Math.\ {\bf 358}, no.~11-12, 1207--1211 (2020).
 
\bibitem{pan12}
K.\ Pankrashkin, {\em  Unitary dimension reduction for a class of self-adjoint extensions with applications to graph-like structures}, J.\ Math.\ Anal.\ Appl.\ {\bf 396}, 640--655  (2012).


\bibitem{post}
O.\ Post, {\em Spectral Analysis on Graph-Like Spaces}, Lect.\ Notes in Math.\ {\bf 2039}, Springer, Berlin, Heidelberg, 2012.

\bibitem{radchenkoviazovska}
D.\ Radchenko and M.\ Viazovska, {\em Fourier interpolation on the real line}, Publ. Math. Inst. Hautes \'{E}tudes Sci. {\bf 129}, 51--81 (2019).

\bibitem{RSI}
M.\ Reed and B.\ Simon, {\em Methods of Modern Mathematical Physics, I: Functional Analysis}, Revised and enlarged edn., Acad. Press, Inc., 1980.

\bibitem{roe}
W.\ Roelcke, {\em Uber den Laplace-Operator auf Riemannschen Mannigfaltigkeiten mit diskontinuierlichen Gruppen}, Math.\ Nachr.\ {\bf 21}, 132--149 (1960).

\bibitem{roth}
J.-P.\ Roth, {\em Le spectre du laplacien sur un graphe}, in:  Th\'{e}orie du potentiel (Orsay, 1983),  Lecture Notes in Math.\ {\bf 1096}, Springer, Berlin, 521-- 539 (1984).


\bibitem{sarnaktalk}
P.\ Sarnak, {\em Spectra of metric graphs and crystalline measures - Peter Sarnak}, Institute for Advanced Study, YouTube, \url{https://www.youtube.com/watch?v=LxHp_j-n2ec} (February 10, 2020).


\bibitem{shivaprasad}
S.\ Shivaprasad, {\em Convergence of Bergman measures towards the Zhang measure}, \arxiv{2005.05753} (2020).


\bibitem{var85b}
N.\ Th.\ Varopoulos, {\em Long range estimates for Markov chains}, Bull.\ Sci.\ Math.\ {\bf 109},
225--252 (1985).


\bibitem{wilms21}
R.\ Wilms, {\em Degeneration of Riemann theta functions and of the Zhang-Kawazumi invariant with applications to a uniform Bogomolov conjecture}, \arxiv{2101:04024} (2021).

\bibitem{woe}
W.\ Woess, \emph{Random Walks on Infinite Graphs and Groups}, Cambridge Univ.\ Press, Cambridge, 2000.


\bibitem{yangyau}
P.\ Yang and S.\ T.\ Yau, {\em Eigenvalues of the Laplacian of compact Riemann surfaces and minimal
submanifolds}, Annali della Scuola Sup.\ di Pisa (4) {\bf 7}, no.~1, 55--63 (1980).


\end{thebibliography}
\end{document}